\newlength\yStones
\newlength\xStones
\newlength\xxStones
\def\Stones{\pst@object{Stones}}
\def\Stones@i#1{%
  \pst@killglue%
  \begingroup%
  \use@par%
  \setlength\xxStones{\xStones}%
  \expandafter\Stones@ii#1,,\@nil
  \endgroup
  \global\addtolength\xStones{0.6cm}%
  \global\addtolength\yStones{-7.5mm}}%
\def\Stones@ii#1,#2,#3\@nil{%
  \rput(\xxStones,\yStones){%
    \psframebox[framesep=0]{%
      \parbox[c][6mm][c]{11mm}{\makebox[11mm]{$#1$}}}}%
  \addtolength\xxStones{1.2cm}%
  \ifx\relax#2\relax\else\Stones@ii#2,#3\@nil\fi}
\def\Stone#1{\fbox{\makebox[12mm]{\strut#1}}\kern2pt}
\newcommand{\bk}{\Bbbk}
\newcommand{\A}{\mathbb{A}}
\newcommand{\bP}{\mathbb{P}}
\newcommand{\fp}{\mathfrak{p}}
\newcommand{\gl}{\mathfrak{gl}}
\newcommand{\Ann}{\mathrm{Ann}}
\newcommand{\Supp}{\mathrm{Supp}}
\newcommand{\cO}{\mathcal{O}}
\newcommand{\cD}{\mathcal{D}}
\newcommand{\cM}{\mathcal{M}}
\newcommand{\Frac}{\mathrm{Frac}}
\newcommand{\rank}{\mathrm{rank}}
\newcommand{\Spec}{\mathrm{Spec}}
\newcommand{\Tor}{\mathrm{Tor}}
\newcommand{\End}{\mathrm{End}}
\newcommand{\Der}{\mathrm{Der}} 
\newcommand{\cV}{\mathcal{V}}
\newtheorem{theorem}{Theorem}[section]
\newtheorem{lemma}[theorem]{Lemma}
\newtheorem{corollary}[theorem]{Corollary}
\newtheorem{proposition}[theorem]{Proposition}
\theoremstyle{definition}
\newtheorem{example}[theorem]{Example}
\newtheorem{definition}[theorem]{Definition}
\newcommand{\details}[1]{{\color{blue}\noindent\textbf{Details:}{#1}}}
                  \newcommand{\details}[1]{}
\begin{document}
\begin{title}[Annihilators of $A\mathcal{V}$-modules and differential operators]{Annihilators of $A\mathcal{V}$-modules and differential operators}
\end{title}

\author[E. Bouaziz]{Emile Bouaziz}

\author[H. Rocha]{Henrique Rocha}


\begin{abstract}
For a smooth algebraic variety $X$, we study the category of finitely generated modules over the ring of function of $X$ that has a compatible action of the Lie algebra $\cV$ of polynomials vector fields on $X$. We show that the associated representation of $\cV$ is given by a differential operator of order depending on the rank of the module. The order of the differential operator provides a natural measure of the complexity of the representation, with the simplest case being that of $D$-modules.
\end{abstract}


\subjclass[2010]{17B10, 17B66, 32C38}


\maketitle




\section*{Introduction}
The representation theory of infinite dimensional Lie algebras is a fairly difficult problem to approach in any generality. Some very particular infinite dimensional Lie algebras have Cartan subalgebras and root decomposition which allows us to consider classes of modules similar to the ones considered for finite-dimensional simple Lie algebras. But, for instance, only in the last decade a breakthrough in the theory of weight modules over Witt algebras and Cartan type Lie algebras of type $W$ was accomplished \cite{BF16, GS22} even though these algebras were already known by the pioneers the Lie theory \cite{Lie80,Car09}. On the other hand, a typical Lie algebra does not have those structures and common techniques of Lie theory may not apply. This is the case of Lie algebras of polynomial vector fields on an affine smooth curve of positive genus which is a simple Lie algebra that does not have neither nilpotent elements nor semisimple elements \cite{BF18}.

To obtain a manageable theory one must substantially restrict the class of representation under consideration. In this paper, we will consider representations of the Lie algebra of vector fields on a smooth algebraic variety that have a compatible action of the algebra of functions. Let $X$ be a smooth algebraic variety, $A = \bk[X]$ its coordinate ring and $\cV = \Der(A)$ the Lie algebra of polynomial vector fields on $X$. In the paper \cite{BFN19}, the authors introduce \emph{$A\cV$-modules}, namely an $A$-module $M$ with a compatible action of $\cV$ in the sense that the \emph{Leibniz formula} holds, $\eta ( f m) = \eta(f) m + f(\eta m)$ for all $f \in A$, $\eta \in \cV$, $m \in M$.

This restriction to this class of representation is natural from the point of view of algebraic geometry and reflects the original incarnation of the Lie algebras in question as derivations of the ring of functions on the variety. For example, $D$-modules furnish such representations. Importantly, these are of a special type and a general $A\cV$-module need not come from a $D$-module. For example, the natural action of vector fields on differential forms gives an $A\cV$-module which is not a $D$-module. The associated representation $\cV \rightarrow \End_{\bk}(M)$ does not need to be $A$-linear.

The main goal of this paper is to prove that $A\cV$-modules that are finitely generated as $A$-modules in a sense satisfy a weaker form of $A$-linearity, see Theorem \ref{theorem:repisdifop}, or Theorem \ref{theorem:geomformulation} for a geometric formulation. Namely, the representation $\cV \rightarrow \End_{\bk}(M)$ (which we consider as a left $A$-module) is a differential operator of some order that depends on the rank of $M$. Note that this implies that the action of $\cV$ on $M$ sheafifies, which we stress \emph{is not} a formal consequence of the definitions. In fact, we prove directly that the action sheafifies and then use this in the course of proving that it is a differential operator.

The content of this paper is as follow. In Section \ref{section:001preliminars} we collect some definitions and prove that every finite $A\cV$-module is a projective $A$-module. In Section \ref{section:002theliealgebraAV} we prove necessary identities of some elements of a certain Lie algebra related to $A$ and $\cV$. Using these identities, we get certain annihilators of $A\cV$-modules in Section \ref{section:003annihilators}. In Section \ref{section:004localization} we show that the representation of $\cV$ in a finite $A\cV$-module sheafifies. Finally, in Section \ref{section:005globaltheory} we define an algebraic geometric notion of $A\cV$-modules and we show the associated representation $\cV \rightarrow \End_{\bk}(M)$ is a differential operator.

\section{Preliminaries}\label{section:001preliminars}

Let $\bk$ be an algebraically closed field of characteristic $0$. Throughout the paper the ground field is $\bk$. All vector spaces, linear maps, algebras and tensor products are assumed to be over $\bk$ unless otherwise stated. 

Let $X \subset \A^n$ an irreducible smooth algebraic variety of dimension $d$. Denote $A= \bk[X]$ the coordinate ring of $X$ and $\cV= \Der(A)$ the Lie algebra of polynomial vector fields on $X$. We remark that $A$ is a Noetherian integral domain, and $\cV$ is a simple Lie algebra \cite{Jor86,Sie96}.

An $A\cV$-module $M$ is a module over both $A$ and $\cV$ such that 
\[
\eta\cdot(f \cdot m) = \eta(f)\cdot m +f \cdot ( \eta \cdot m) \quad \text {for each} \ \eta \in \cV, \ f \in \A, \ m \in M.
\]
Equivalently, $M$ is a module over the smash product $A \# U(\cV)$, where $U(\cV)$ denotes the universal enveloping algebra of $\cV$. We say that an $A\cV$-module $M$ is finite if $M$ is finitely generated as an $A$-module.

The smash product $A \# U(\cV)$ is an associative algebra, thus the commutator defines a Lie algebra structure on it. For all $f,g \in A$, $\eta, \mu \in \cV$, we have that
\[
(f \# \eta)(g\# \mu) = f\eta(g) \# \mu + fg \# \eta \mu.
\]
thus
 \[
 [f \# \eta, g \# \mu] = fg \# [\eta, \mu] + f  \eta(g) \# \mu - g  \mu(f) \# \eta.
 \]
It follows that $A \# \cV$ is a Lie subalgebra of $A \# U(\cV)$. If $M$ is an $A\cV$-module, the set of all operators of $\gl_{\bk}(M)$ given by the action of $A \# \cV$ corresponds to the $A$-module generated by the image of the representation $\cV \rightarrow \gl_{\bk}(M)$. 


For an $A$-module $M$ and a proper prime ideal $\fp \in \Spec(A)$, we define \[\rank_{\fp} (M) = \dim_{\bk (\fp)} ( \bk (\fp) \otimes M), \] where $\bk (\fp)$ is the residual field of the local algebra $A_{\fp}$. The number $\rank_{\fp} (M)$ is the minimal number of generators of $M_{\fp}$ as an $A_{\fp}$-module. We define $\rank(M) = \rank_{(0)} (M )= \dim_{\Frac(A)} \Frac(A) \otimes_A M $. By Nakayama's Lemma, $\rank (M) \leq \rank_{\fp} (M)$ for each $\fp \in \Spec(A)$. Furthermore, $M$ is a projective $A$-module if and only if $\rank(M) = \rank_{\fp}(M)$ for each $\fp \in \Spec(A)$ \cite[Exercise 20.13]{Eis95}.

\begin{lemma}\label{lemma:annihilatoriszero}
Let $M$ be an $A\cV$-module. Then 
\[
\Ann_A(M) = \{ f \in A \mid \forall m \in M \ fm = 0 \} = 0.
\]
\end{lemma}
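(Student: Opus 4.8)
The plan is to exploit the simplicity of $\cV$ together with the Leibniz compatibility. Observe first that $\Ann_A(M)$ is not just an ideal of $A$ but is stable under the action of $\cV$: if $fm = 0$ for all $m \in M$ and $\eta \in \cV$, then applying the Leibniz formula gives $0 = \eta\cdot(f\cdot m) = \eta(f)\cdot m + f\cdot(\eta\cdot m) = \eta(f)\cdot m$ for all $m$, so $\eta(f) \in \Ann_A(M)$ as well. Thus $\Ann_A(M)$ is a $\cV$-stable ideal of $A$, i.e. an ideal $I \subseteq A$ with $\eta(I) \subseteq I$ for every $\eta \in \cV$.

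The next step is to show that the only $\cV$-stable ideals of $A$ are $0$ and $A$. Geometrically, a nonzero proper $\cV$-stable radical ideal would cut out a proper nonempty closed subvariety invariant under all polynomial vector fields, which is impossible on an irreducible variety where $\cV$ acts transitively on a dense open set; more algebraically, one can argue that $\sqrt{I}$ is again $\cV$-stable (since in characteristic $0$, $\eta(f^n) = nf^{n-1}\eta(f)$ lets one descend from powers), reducing to the radical case, and then use that for a smooth $X$ the vector fields separate points and directions to force $\sqrt I \in \{0, A\}$. I would phrase this as: if $I \neq 0$ pick $0 \neq f \in I$; by smoothness and since $d = \dim X \geq 1$ (the case $d=0$ making $\cV = 0$ trivial but then $A = \bk$ and $M$ torsion-free forces the claim anyway, or is excluded), there is a point where $df \neq 0$, hence some $\eta \in \cV$ with $\eta(f) \notin (f)$, and iterating one generates an ideal not contained in any proper prime — the cleanest route is Hasse–Schmidt/higher derivations or simply citing that $A$ is $\cV$-simple, which follows from the cited simplicity results \cite{Jor86,Sie96} or is elementary for smooth affine $X$.

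Finally, having reduced to $\Ann_A(M) \in \{0, A\}$, I rule out $\Ann_A(M) = A$: that would mean $1 \cdot m = 0$ for all $m$, i.e. $M = 0$. One must therefore either assume $M \neq 0$ (harmless, and surely intended) or note that the statement is vacuous for $M = 0$. Hence $\Ann_A(M) = 0$.

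\medskip

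\noindent The main obstacle is the second step — proving $A$ has no nontrivial $\cV$-stable ideals. The subtlety is that I want this for \emph{arbitrary} ideals, not just radical ones, so I expect to first reduce to the radical case (using $\operatorname{char}\bk = 0$ to see $\sqrt{I}$ is $\cV$-stable) and then invoke smoothness to produce, for any nonzero $f$, a vector field moving $f$ outside every candidate proper prime. If a self-contained argument is awkward, the honest move is to deduce it from the established $\cV$-simplicity of $A$ for smooth affine $X$; I would cite this rather than reprove it.
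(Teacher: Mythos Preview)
Your proposal is correct and follows essentially the same approach as the paper: show that $\Ann_A(M)$ is a $\cV$-stable ideal of $A$ (equivalently, an $A\cV$-submodule of $A$), invoke the $\cV$-simplicity of $A$ to conclude it is $0$ or $A$, and rule out $A$ by the tacit assumption $M\neq 0$. The paper compresses all of this into a single sentence (``$\Ann_A(M)$ is a proper $A\cV$-submodule of $A$, therefore trivial''), taking the $\cV$-simplicity of $A$ for granted, whereas you unpack that step and sketch how it follows from smoothness in characteristic $0$; your observation that the $M=0$ case needs to be acknowledged is also a fair point the paper glosses over.
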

\begin{proof}
    The ideal $\Ann_A(M)$ is a proper $A\cV$-submodule of $A$, therefore $\Ann_A(M)$ is trivial.
\end{proof}
For an $A$-module $M$, we denote by $\Lambda_A^r(M)$ the $r$-exterior power of $M$ over $A$. If $M$ is an $A\cV$-module, then $\Lambda_A^r(M)$ is also an $A\cV$-module where the action of $\cV$ is given by the coproduct of $U(\cV)$. This will be useful to prove the following proposition.
\begin{proposition}\label{proposition:avmodisprojective}
    Let $M$ be a finite $A\cV$-module, then $M$ is a projective $A$-module. In particular,
    \[
    \Tor_A(M) = \{m \in M \mid \exists f \in A, \ f \neq 0, \  \text{such that} \ fm =0   \} =0
    \]
\end{proposition}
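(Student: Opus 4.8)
The plan is to show that the rank function $\fp \mapsto \rank_\fp(M)$ is constant on $\Spec(A)$, which by the cited criterion \cite[Exercise 20.13]{Eis95} is equivalent to projectivity of the finite $A$-module $M$. Since $\rank(M) \le \rank_\fp(M)$ always holds by Nakayama, it suffices to rule out strict inequality at any prime $\fp$. The idea is that the locus where the rank jumps is a proper closed subset of $\Spec(A)$, and I want to exploit the $\cV$-action to see that this locus must be empty. More precisely, set $r = \rank(M)$ and consider $N = \Lambda_A^{r+1}(M)$. Then $\rank(N) = \binom{r+1}{r+1}\cdots$; in fact $\rank_\fp(\Lambda_A^{r+1}(M)) = \binom{\rank_\fp(M)}{r+1}$, so $\rank(N) = \binom{r}{r+1} = 0$, meaning $N$ is a torsion $A$-module, while $N$ is nonzero exactly at those primes $\fp$ where $\rank_\fp(M) \ge r+1$.

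Now the key point: $N = \Lambda_A^{r+1}(M)$ is again a finite $A\cV$-module, with the $\cV$-action induced via the coproduct as noted just before the proposition. By Lemma \ref{lemma:annihilatoriszero}, $\Ann_A(N) = 0$ for any $A\cV$-module $N$. But $N$ is a \emph{finitely generated torsion} $A$-module over the Noetherian integral domain $A$, so $\Ann_A(N) \ne 0$: indeed, if $N$ is generated by $n_1, \dots, n_k$ and each $n_i$ is killed by some nonzero $f_i \in A$, then the nonzero product $f_1 \cdots f_k$ (nonzero since $A$ is a domain) annihilates $N$. Hence $N = 0$, which forces $\binom{\rank_\fp(M)}{r+1} = 0$, i.e. $\rank_\fp(M) \le r$ for every $\fp$. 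Combined with $\rank_\fp(M) \ge r$ this gives $\rank_\fp(M) = r$ for all $\fp$, so $M$ is projective. The final assertion $\Tor_A(M) = 0$ then follows since a projective module over a domain is torsion-free (a torsion element generates a torsion submodule, which by the same annihilator argument would contradict $\Ann_A = 0$ applied to $M$ itself — or more directly, projective implies flat implies torsion-free over a domain).

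The main obstacle, and the only step requiring care, is verifying that $\Lambda_A^{r+1}(M)$ is genuinely a finite $A\cV$-module so that Lemma \ref{lemma:annihilatoriszero} applies to it: one must check that the induced $\cV$-action via $\eta \cdot (m_1 \wedge \cdots \wedge m_{r+1}) = \sum_i m_1 \wedge \cdots \wedge \eta \cdot m_i \wedge \cdots \wedge m_{r+1}$ is well-defined over $A$ and satisfies the Leibniz rule with respect to multiplication by elements of $A$. This is exactly the remark preceding the proposition, so I would simply invoke it. The identity $\rank_\fp(\Lambda_A^{r+1}(M)) = \binom{\rank_\fp(M)}{r+1}$ is a standard base-change fact: exterior powers commute with the base change $A \to \bk(\fp)$, and over the field $\bk(\fp)$ the module becomes a vector space of dimension $\rank_\fp(M)$ whose $(r+1)$-st exterior power has the stated binomial dimension. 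Everything else is bookkeeping, so I expect the proof to be short.
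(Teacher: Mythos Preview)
Your proof is correct and follows essentially the same route as the paper's: both arguments pass to the exterior power $\Lambda_A^{r+1}(M)$, observe that it is again a finite $A\cV$-module, note that it is torsion (equivalently, has proper support) since its generic rank vanishes, and then invoke Lemma~\ref{lemma:annihilatoriszero} to force it to be zero, whence $\rank_\fp(M)=r$ for all $\fp$. Your phrasing of the key step---a nonzero finitely generated torsion module over a domain has nonzero annihilator---is in fact a bit more transparent than the paper's support/$V(\Ann)$ formulation, but the content is identical.
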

\begin{proof}
    Set $r= \rank(M)$, so $\Lambda^r_A(M)$ is nontrivial. We will prove that $\Lambda_A^{p}(M) =0 $ for all $p > r$. Suppose $\Lambda^{p}_A(M) \neq 0$ for some $p>r$. Since localization commutes with exterior powers and $\Lambda_{\Frac(A)}^{p}(\Frac(A) \otimes_A M)=0$, we have that there exists a nonzero proper prime ideal $\fp$ of $A$ such that $\left ( \Lambda_A^p(M) \right)_{\fp} = \Lambda_{A_{\fp}}^p(M_{\fp}) =0$.
    Therefore, \[\fp \in \Supp (\Lambda^{p}_A(M) = V(\Ann_A(\Lambda^{p}_A(M ))) \neq 0,\]
    thus $\Ann_A(\Lambda^{p}_A(M) )\neq 0$. However, $\Lambda^{p}_A(M)$ is an $A\cV$-module and the annihilator of any $A\cV$-module is zero by Lemma \ref{lemma:annihilatoriszero}. Hence, $\Lambda^{p}_A (M)$ must be trivial. Therefore, $\Lambda_A^{p}(M) =0 $ for all $p >r$, which implies that $\rank_{\fp}(M) = r $ for every $\fp \in \Spec(A)$. We conclude that $M$ is projective.
\end{proof}

\section{The Lie algebra $A \# \cV$}\label{section:002theliealgebraAV}

 All identities proved in this section do not rely on $X$. We only need a commutative algebra $A$ and $\cV = \Der(A)$.

\begin{definition}
For each $f \in A$, $\eta \in \cV$ and $p \geq 1$ consider the following element of $A \#\cV$
\begin{equation*} \label{equation:definitionomega}
    \Omega_p(f,\eta ) =  \sum_{k=0}^p (-1)^{k} \binom{p}{k} f^{p-k} \# f^k \eta \in A \# U(\cV).
\end{equation*}
\end{definition}
\begin{example} $\Omega_{1}(f,\eta)$ vanishes identically if and only if our module $M$ is a $\cD$-module. In general the elements $\Omega_{p}$ measure a certain \emph{higher} non-linearity in the $\cV$ action on $M$. As a further example, the reader can check that in the case of the natural adjoint action of $\cV$ on itself, the elements $\Omega_{2}$ all vanish.\end{example}

We will prove identities of those elements inside the Lie algebra $A \# \cV$. The first interesting property of these elements is that they commute with $A \cong A \# \bk \subset A \# U(\cV)$. Therefore, their actions on an $A\cV$-module can be seen as elements of $\End_A(M)\subset\End_{\bk}(M)$.

\begin{lemma}\label{lemma:omegaactioncommuteswitha}
For all $f,g \in A$, $\eta \in \cV$, and $p\geq 1$,
\[
\Omega_p(f,\eta) (g \# 1 ) = (g\# 1 ) \Omega_p(f,\eta).
\]
\end{lemma}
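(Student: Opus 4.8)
The plan is to work entirely inside the associative algebra $A \# U(\cV)$ and show that $\Omega_p(f,\eta)$ commutes with every element $g \# 1$. Since the map $g \mapsto g \# 1$ is an algebra embedding of $A$ into $A \# U(\cV)$, and since the commutator $[g \# 1, -]$ is a derivation of $A \# U(\cV)$, it suffices to compute $[g \# 1, \Omega_p(f,\eta)]$ directly using the multiplication rule $(f \# \eta)(g \# \mu) = f\eta(g) \# \mu + fg \# \eta\mu$ recalled in the preliminaries. In particular, for a single term $f^{p-k} \# f^k\eta$ of $\Omega_p(f,\eta)$, I would first compute
\[
(g \# 1)(f^{p-k} \# f^k \eta) = g f^{p-k} \# f^k \eta,
\]
and
\[
(f^{p-k} \# f^k\eta)(g \# 1) = f^{p-k} (f^k\eta)(g) \# 1 + f^{p-k} g \# f^k \eta = f^{p} \eta(g) f^{-k}\cdot f^{k} \# 1 + g f^{p-k} \# f^k\eta,
\]
being careful to write this without negative exponents, i.e. $f^{p-k}(f^k\eta)(g) = f^{p-k} f^k \eta(g) = f^p \eta(g)$ when $k \le p$ (and this term is simply $f^p\eta(g)$ regardless of $k$). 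Hence
\[
[f^{p-k}\# f^k\eta,\, g\#1] = f^{p}\eta(g) \# 1.
\]

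The key observation is then that the right-hand side $f^p\eta(g) \# 1$ is \emph{independent of $k$}. Therefore
\[
[\Omega_p(f,\eta),\, g \# 1] = \sum_{k=0}^p (-1)^k \binom{p}{k}\, [f^{p-k}\# f^k\eta,\, g\#1] = \left( \sum_{k=0}^p (-1)^k \binom{p}{k} \right) f^p\eta(g) \# 1,
\]
and the scalar factor $\sum_{k=0}^p (-1)^k \binom{p}{k}$ equals $(1-1)^p = 0$ for every $p \ge 1$ by the binomial theorem. This gives $[\Omega_p(f,\eta), g\#1] = 0$, which is exactly the claimed identity.

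I expect the only real subtlety to be bookkeeping: making sure that the factor $(f^k\eta)(g)$ is handled correctly as "apply the vector field $f^k\eta$ to the function $g$", which gives $f^k \eta(g)$, and that this combines with the leading $f^{p-k}$ to produce $f^p\eta(g)$ with no dependence on $k$. Everything else is the elementary identity $\sum_k (-1)^k\binom{p}{k} = 0$. There is no analytic or structural obstacle here; the lemma is a direct computation, and the content is precisely that the binomial coefficients in the definition of $\Omega_p$ were chosen to kill the $A$-dependence of the bracket. One may alternatively phrase this as: $\Omega_p(f,\eta)$ is, up to the sign convention, the $p$-th finite difference of the family $k \mapsto f^{p-k}\#f^k\eta$, and finite differences annihilate the "affine in $k$" part, of which $[-, g\#1]$ is an instance.
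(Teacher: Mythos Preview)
Your proof is correct and follows essentially the same route as the paper: both compute the commutator term by term, observe that $[f^{p-k}\#f^k\eta,\,g\#1]=f^p\eta(g)\#1$ is independent of $k$, and then invoke $\sum_{k=0}^p(-1)^k\binom{p}{k}=0$. Your write-up is in fact a bit cleaner in isolating the ``independent of $k$'' observation before summing.
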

\begin{proof}
Using that $\displaystyle{\sum_{k=0}^p (-1)^{k} \binom{p}{k}f^r }=0$, we see that
\begin{align*}
    & \Omega_p(f,\eta) (g \# 1 ) - (g\# 1 ) \Omega_p(f,\eta) \\
    = & \sum_{k=0}^p (-1)^k \binom{p}{k} \left ( (f^{p-k} \# f^k \eta) (g \# 1 ) - (g \# 1 )  (f^{p-k} \# f^k \eta)   \right)\\
    = & \sum_{k=0}^p (-1)^k \binom{p}{k} \left ( (f^{p} \eta (g) \# 1 +f^{n-k}g \# \eta - g f^{p-k} \# f^k \eta   \right)  =0 
\end{align*}
\end{proof}

The following lemma will be used frequently when we calculate other brackets inside $A \# \cV$. The proofs of the various identities satisfied by the elements $\Omega_{p}$ follow by standard manipulations with Lie brackets, but we include some details anyway, for the convenience of the reader.
\begin{lemma}\label{lemma:commutatorofomegas}
For every $f \in A$, $\eta, \mu \in \cV$, and $p,q \geq 1$,
\[
\left[\Omega_p(f,\eta), \Omega_q(f, \mu) \right] = \Omega_{p+q}(f,[\eta,\mu] ) +p \Omega_{p+q-1} (f,\mu(f)\eta) -q \Omega_{p+q-1} (f,\eta(f)\mu)
\]
\end{lemma}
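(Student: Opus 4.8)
The strategy is a direct, if somewhat lengthy, computation in the associative algebra $A \# U(\cV)$, reducing the bracket on the left-hand side to a sum of terms of the form $f^a \# f^b \zeta$ and then reorganizing that sum into the three $\Omega$-expressions on the right. First I would expand both $\Omega_p(f,\eta)$ and $\Omega_q(f,\mu)$ as double sums and use the product formula $(f^{p-k}\# f^k\eta)(f^{q-\ell}\# f^\ell\mu) = f^{p-k}(f^k\eta)(f^{q-\ell})\# f^\ell\mu + f^{p+q-k-\ell}\# (f^k\eta)(f^\ell\mu)$. Since $(f^k\eta)(f^{q-\ell}) = (q-\ell)f^{k+q-\ell-1}\eta(f)$ and $(f^k\eta)(f^\ell\mu) = \ell f^{k+\ell-1}\eta(f)\mu + f^{k+\ell}\eta\mu$, the product becomes a combination of three vector-field-valued terms: one proportional to $\eta(f)\mu$, one to $[\eta,\mu]$ (after antisymmetrizing, the $\eta\mu$ and $\mu\eta$ pieces will combine), and one to $\mu(f)\eta$. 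So after antisymmetrizing in $(p,\eta)\leftrightarrow(q,\mu)$ the commutator is visibly a sum of three families of terms, and the task is to match each family against the claimed $\Omega_{p+q}(f,[\eta,\mu])$, $p\,\Omega_{p+q-1}(f,\mu(f)\eta)$, and $-q\,\Omega_{p+q-1}(f,\eta(f)\mu)$.

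The key combinatorial step is then a pair of binomial identities. Collecting the coefficient of a fixed $f^a \# f^b\zeta$ (for each of the three flavors of $\zeta$) produces a sum over $k$ of $(-1)^{k}\binom{p}{k}\binom{q}{b-k}$ type expressions, possibly with an extra linear factor in $k$ or $\ell$ coming from the derivative $(q-\ell)$ or $\ell$. The Vandermonde convolution $\sum_k (-1)^k\binom{p}{k}\binom{q}{m-k}$ and its variants with an inserted linear factor (which one handles by writing $k\binom{p}{k} = p\binom{p-1}{k-1}$, reducing to Vandermonde again) are exactly what is needed to collapse these double sums into the single-sum $\Omega$'s. I would present the identity $\sum_{k}(-1)^k\binom{p}{k}\binom{q}{m-k} = (-1)^{?}\binom{?}{?}$ once and invoke it for each flavor; the factors of $p$ and $q$ in front of the lower-order $\Omega$'s come precisely from the $k\binom{p}{k}=p\binom{p-1}{k-1}$ manipulation applied to the $\eta(f)$- and $\mu(f)$-terms.

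A cleaner alternative I would at least mention: rewrite $\Omega_p(f,\eta)$ using the substitution trick. Note that $\Omega_p(f,\eta) = (\mathrm{ad}\text{-like operation})$; concretely, in $A\#U(\cV)$ one checks $\Omega_p(f,\eta) = \sum_k (-1)^k\binom{p}{k} f^{p-k}\#f^k\eta$ satisfies $\Omega_{p}(f,\eta) = [\,(-1)\,(1\#f\cdot) \text{-conjugation applied } p \text{ times}\,]$ — more precisely one can try to realize $\Omega_p$ as a $p$-fold iterated bracket $(\mathrm{ad}\,\theta)^p(1\#\eta)$ for a suitable $\theta$ involving $f$, which would make the Leibniz/Jacobi bookkeeping automatic. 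If such a formula holds, Lemma \ref{lemma:commutatorofomegas} follows from the graded Leibniz rule for iterated adjoint actions together with the base-case bracket $[1\#\eta, 1\#\mu] = 1\#[\eta,\mu]$ and the "error terms" $[f\#\text{something}, 1\#\mu]$ computed from the displayed formula for $[f\#\eta,g\#\mu]$.

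\textbf{Main obstacle.} The genuine difficulty is purely bookkeeping: keeping the three flavors of vector-field terms ($\eta(f)\mu$, $[\eta,\mu]$, $\mu(f)\eta$) separated while simultaneously running the Vandermonde collapses, and getting all the signs and the leading coefficients $p$ and $q$ exactly right — in particular verifying that the "diagonal" contributions where the derivative hits the same power (the $\ell=0$ or $q-\ell=0$ boundary terms) do not spoil the identity and that the $\Omega_{p+q}(f,[\eta,\mu])$ piece really assembles with coefficient exactly $1$. The iterated-adjoint reformulation, if it can be made to work, is the safest route; otherwise the brute-force double-sum computation works but requires care. Either way, Lemma \ref{lemma:omegaactioncommuteswitha} is used implicitly to know all these elements live in (and the bracket stays in) the Lie subalgebra $A\#\cV$, and no property of $X$ is needed, consistent with the remark opening the section.
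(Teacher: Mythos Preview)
Your direct-computation approach is essentially the paper's. The paper too expands the double sum, applies the bracket formula in $A\#\cV$, and then separates into the three families indexed by $[\eta,\mu]$, $\eta(f)\mu$, $\mu(f)\eta$. Where you propose Vandermonde convolution together with $k\binom{p}{k}=p\binom{p-1}{k-1}$ to collapse the double sums, the paper packages the same identities via generating functions: the coefficient of $f^{p+q-u}\#f^u[\eta,\mu]$ is read off from $(1-y)^p(1-y)^q=(1-y)^{p+q}$, and the $\eta(f)\mu$ coefficient from $\bigl(y\tfrac{d}{dy}(1-y)^q\bigr)(1-y)^p=-qy(1-y)^{p+q-1}$. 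These are equivalent bookkeeping devices, and either yields the factors $p$ and $-q$ in front of the lower $\Omega$'s.

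Two points to tighten. First, your expansion $(f^k\eta)(f^\ell\mu)=\ell f^{k+\ell-1}\eta(f)\mu+f^{k+\ell}\eta\mu$ is not literally meaningful in $U(\cV)$ (there is no scalar $f^{k+\ell}$ there to multiply the degree-two element $\eta\mu$ by); it is cleaner, and is what the paper does, to antisymmetrize first and work directly with $f^{p+q-k-\ell}\#[f^k\eta,f^\ell\mu]$, then expand the honest Lie bracket $[f^k\eta,f^\ell\mu]\in\cV$. Second, the ``outer derivative'' pieces $(q-\ell)f^{p+q-\ell-1}\eta(f)\#f^\ell\mu$ and $(p-k)f^{p+q-k-1}\mu(f)\#f^k\eta$ carry $\eta(f)$ (resp.\ $\mu(f)$) on the $A$-side of the tensor, not inside the derivation, so they do \emph{not} merge with the $\Omega_{p+q-1}(f,\eta(f)\mu)$ family. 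The paper kills them at the outset by observing they are independent of $k$ (resp.\ $l$), whence the alternating sum $\sum_k(-1)^k\binom{p}{k}=0$ annihilates them. You should make this vanishing explicit before running the convolution.

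Finally, the iterated-adjoint reformulation does not pan out: one has $\Omega_p(f,\eta)=(f\otimes1-1\otimes f)^p(1\#\eta)$ for the $A\otimes A$-action $(a\otimes b)(g\#\tau)=ag\#b\tau$, but this is not $\mathrm{ad}(\theta)^p$ for any $\theta\in A\#U(\cV)$; for instance $[f\#1,\,1\#\eta]=-\eta(f)\#1$, not $\Omega_1(f,\eta)=f\#\eta-1\#f\eta$. So Jacobi does not hand you the lemma for free, and the brute-force route is the one to take.
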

\begin{proof}
Using that $\displaystyle{\sum_{k=0}^r (-1)^{k} \binom{r}{k}h^r =0}$ for all $r$ and $h \in A$,
\begin{align*}
    & \left[\Omega_p(f,\eta), \Omega_q(f, \mu) \right] \\
    = & \sum_{k=0}^p \sum_{l=0}^q (-1)^{k+l} \binom{p}{k} \binom{q}{l} \left ( f^p \eta(f^{q-l}) \# f^{q-l}\mu -f^{q} \mu(f^{p-k}) \# f^{k} \eta + f^{p-k} f^{q-l}\# [f^k \eta, f^l \mu] \right ) \\
    = & \sum_{k=0}^p \sum_{l=0}^q (-1)^{k+l} \binom{p}{k} \binom{q}{l} f^{p+q-k-l}\# [f^k \eta, f^l \mu] \\
    = & \sum_{k=0}^p \sum_{l=0}^q (-1)^{k+l} \binom{p}{k} \binom{q}{l} f^{p+q-k-l}\# \left ( l f^{k+l-1}\eta(f)\mu - k f^{k+l-1} \mu(f)\eta + f^{k+l} [\eta,\mu] \right) \\
\end{align*}

Let's separate it in three sums. Set $u=k+l$. Thus, the coefficient at $f^{p+q-u} \# f^{u} [\eta,\mu]$ is the same as in $y^u$ in
\begin{align*}
    & \sum_{k=0}^p \sum_{l=0}^q (-1)^{k+l} \binom{p}{k} \binom{q}{l} y^{k+l} \\
    = & \sum_{k=0}^p (-1)^k \binom{p}{k} y^k \left ( \sum_{l=0}^q (-1)^{k+l} \binom{q}{l} y^{l} \right ) \\
    = & (1-y)^p (1-y)^q = (1-y)^{p+q} \\
    = & \sum_{u=0}^{p+q} (-1)^u \binom{p+q}{u} y^u
\end{align*}
Hence, 
\[
\sum_{k=0}^p \sum_{l=0}^q (-1)^{k+l} \binom{p}{k} \binom{q}{l} f^{p+q-k-l} \# f^{k+l}[\eta,\mu] = \Omega_{p+q}(f,[\eta,\mu]).
\]

The coefficient at $f^{p+q-u} \# f^{u-1} \eta(f) \mu$ is the same as in $y^u$ in
\begin{align*}
    &\sum_{k=0}^p \sum_{l=0}^q (-1)^{k+l} \binom{p}{k} \binom{q}{l} l y^{k+l} \\
    = & \sum_{l=0}^q (-1)^l \binom{q}{l} l y^l \left (\sum_{k=0}^p (-1)^k \binom{p}{k} y^k \right ) \\
    = & \left ( y \frac{d}{d y} \right  )\left (\sum_{l=0}^q (-1)^l \binom{q}{l} y^l  \right  ) \left (\sum_{k=0}^p (-1)^k \binom{p}{k} y^k \right )  \\
    = &   \left (\left ( y \frac{d}{d y} \right  ) (1-y)^q \right  ) (1-y)^p = -qy(1-y)^{p+q-1} \\
    = & -q \sum_{u=0}^{p+q-1} (-1)^{u} \binom{p+q-1}{u} y^{u+1}.
\end{align*}
Hence,
\begin{align*}
    & \sum_{k=0}^p \sum_{l=0}^q (-1)^{k+l} \binom{p}{k} \binom{q}{l} l f^{p+q-k-l} \# f^{k+l-1} \eta(f) \mu \\
    & = -q \sum_{u=0}^{p+q-1} (-1)^{u} \binom{p+q-1}{u} f^{p+q-u-1} \# f^{u} \eta(f) \mu  \\
    & = -q\Omega_{p+q-1}(f,\eta(f)\mu).
\end{align*}
Similarly, 
\begin{align*}
 & - \sum_{k=0}^p \sum_{l=0}^q (-1)^{k+l} \binom{p}{k} \binom{q}{l} k f^{p+q-k-l} \# f^{k+l-1} \eta(f) \mu \\
    = &  p \sum_{u=0}^{p+q-1} (-1)^{u} \binom{p+q-1}{u} f^{p+q-u-1} \# f^{u} \mu(f) \eta  \\
    = &  p \Omega_{p+q-1} ( f, \mu(f) \eta) 
\end{align*}

We conclude that 
\[
\left[\Omega_p(f,\eta), \Omega_q(f, \mu) \right] = \Omega_{p+q}(f,[\eta,\mu] ) +p \Omega_{p+q-1} (f,\mu(f)\eta) -q \Omega_{p+q-1} (f,\eta(f)\mu)
\]
\end{proof}

The following identities bear a resemblance to those of \cite[Section 2]{BF18}.

\begin{lemma}\label{lemma:bracketrelations001}
For every $g,h \in A$, $\eta, \mu \in \cV$, $p,q \geq 1$,

\begin{enumerate}
    \item\label{itemlemma01:identitieslemma02} $[\Omega_p (f,\eta), \Omega_q (f,g\mu) ] - [\Omega_p (f,g\eta), \Omega_q (f,\mu) ] = \Omega_{p+q} (f, \eta(g) \mu + \mu(g) \eta).$
    \item\label{itemlemma02:identitieslemma02} $[\Omega_p (f,g\eta), \Omega_q (f,h\eta) ] - [\Omega_p (f,\eta), \Omega_q (f,gh\eta) ] = 2\Omega_{p+q}(f,h\eta(g) \eta)$
    \item\label{itemlemma03:identitieslemma02} $[\Omega_p (f,\eta), \Omega_q (f,g\eta) ] - [\Omega_p (f,g\eta), \Omega_q (f,\eta) ] = 2 \Omega_{p+q} (f,  \eta(g) \eta )$;
    \item\label{itemlemma04:identitieslemma02} $[\Omega_p (f,g\eta), \Omega_q (f,\eta(h)\eta) ] - [\Omega_p (f,\eta), \Omega_q (f,g\eta(h)\eta) ] = 2\Omega_{p+q}(f,\eta(g)\eta(h) \eta)$;
    \item\label{itemlemma05:identitieslemma02} $\Omega_{p+q} (f, g \eta(\eta(h)) \eta ) =\Omega_{p+q} (f, \eta(g \mu(h))\eta  )  - \Omega_{p+q} (f,\eta(g) \eta(h)\eta ) $
\end{enumerate}
\end{lemma}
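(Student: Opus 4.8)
The plan is to obtain all five identities from Lemma~\ref{lemma:commutatorofomegas} by bracket bookkeeping, using three elementary facts repeatedly: (a) $\Omega_r(f,-)$ is additive in its vector-field slot; (b) for $g\in A$ and $\eta,\mu\in\cV$ one has $[\eta,g\mu]=\eta(g)\mu+g[\eta,\mu]$, $[g\eta,\mu]=-\mu(g)\eta+g[\eta,\mu]$, and $(g\mu)(f)=g\,\mu(f)$; and (c) the Leibniz rule $\eta(gh)=\eta(g)h+g\eta(h)$.

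For item~(\ref{itemlemma01:identitieslemma02}) I would apply Lemma~\ref{lemma:commutatorofomegas} to each of the two commutators on the left. In $[\Omega_p(f,\eta),\Omega_q(f,g\mu)]$ the term $\Omega_{p+q}(f,[\eta,g\mu])$ expands, by (a) and (b), as $\Omega_{p+q}(f,\eta(g)\mu)+\Omega_{p+q}(f,g[\eta,\mu])$, and the lower-order part is $p\Omega_{p+q-1}(f,g\mu(f)\eta)-q\Omega_{p+q-1}(f,g\eta(f)\mu)$; in $[\Omega_p(f,g\eta),\Omega_q(f,\mu)]$ the analogous term is $-\Omega_{p+q}(f,\mu(g)\eta)+\Omega_{p+q}(f,g[\eta,\mu])$, and — this is the whole point of stating the identity as a difference of two commutators — the lower-order part is exactly the same. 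Subtracting, the $g[\eta,\mu]$ contributions and all the $\Omega_{p+q-1}$ terms drop out, and (a) leaves $\Omega_{p+q}(f,\eta(g)\mu+\mu(g)\eta)$, as claimed. Item~(\ref{itemlemma03:identitieslemma02}) is then just the specialization $\mu=\eta$ of item~(\ref{itemlemma01:identitieslemma02}).

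For item~(\ref{itemlemma02:identitieslemma02}) I would run the same computation with $(\eta,\mu)\rightsquigarrow(g\eta,h\eta)$ in the first commutator and $(\eta,\mu)\rightsquigarrow(\eta,gh\eta)$ in the second, using $[g\eta,h\eta]=(g\eta(h)-h\eta(g))\eta$ and $[\eta,gh\eta]=\eta(gh)\eta$. In both commutators the $\Omega_{p+q-1}$ contributions collapse to the common value $(p-q)\Omega_{p+q-1}(f,gh\,\eta(f)\eta)$ and hence cancel in the difference, and combining the two leading terms via the Leibniz rule (c) leaves the stated multiple of $\Omega_{p+q}(f,h\eta(g)\eta)$. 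Item~(\ref{itemlemma04:identitieslemma02}) follows from item~(\ref{itemlemma02:identitieslemma02}) by replacing $h$ with $\eta(h)$ and noting $\eta(h)\eta(g)=\eta(g)\eta(h)$. Finally, item~(\ref{itemlemma05:identitieslemma02}) is purely formal and needs only (a) and (c), not Lemma~\ref{lemma:commutatorofomegas}: from $\eta(g\eta(h))=\eta(g)\eta(h)+g\eta(\eta(h))$ and additivity one reads off $\Omega_{p+q}(f,\eta(g\eta(h))\eta)-\Omega_{p+q}(f,\eta(g)\eta(h)\eta)=\Omega_{p+q}(f,g\eta(\eta(h))\eta)$ (interpreting the ``$\mu$'' on the right-hand side of the displayed identity as ``$\eta$'').

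There is no conceptual obstacle here; the only real work is the sign-and-coefficient bookkeeping in items~(\ref{itemlemma01:identitieslemma02}) and~(\ref{itemlemma02:identitieslemma02}) and checking that the order-$(p+q-1)$ terms genuinely vanish in the differences (they do, precisely because each identity pairs two commutators whose lower-order parts agree). Since these identities feed into the annihilator estimates of the next section, it is worth cross-checking the numerical coefficients — in particular the overall sign appearing in~(\ref{itemlemma02:identitieslemma02}), and hence in~(\ref{itemlemma04:identitieslemma02}) — against a direct expansion of the double sums defining the $\Omega$'s.
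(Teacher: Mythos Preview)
Your argument is correct and follows exactly the paper's approach: apply Lemma~\ref{lemma:commutatorofomegas} to each bracket, observe that the $\Omega_{p+q-1}$ contributions cancel in the difference, reduce to the Lie-bracket identities $[\eta,g\mu]-[g\eta,\mu]=\eta(g)\mu+\mu(g)\eta$ and $[\eta,gh\eta]-[g\eta,h\eta]=2h\eta(g)\eta$, and then deduce (\ref{itemlemma03:identitieslemma02})--(\ref{itemlemma05:identitieslemma02}) from (\ref{itemlemma01:identitieslemma02}) and (\ref{itemlemma02:identitieslemma02}). Your caution about the overall sign in (\ref{itemlemma02:identitieslemma02}) is well placed --- the computation gives $[g\eta,h\eta]-[\eta,gh\eta]=-2h\eta(g)\eta$, so the sign in the displayed statement (and hence in (\ref{itemlemma04:identitieslemma02})) is off by $-1$, though this is harmless for the later applications.
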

\begin{proof}
For item \eqref{itemlemma01:identitieslemma02}, we have that $[\eta,g \mu] - [g\eta,\mu] = \eta(g) \mu + \mu(g)\eta $. Therefore,
\begin{align*}
 & [\Omega_p (f,\eta), \Omega_q (f,g\mu) ] - [\Omega_p (f,\eta), \Omega_q (f,g\mu) ]\\
= & \Omega_{p+q}(f,[\eta,g\mu] ) +p \Omega_{p+q-1} (f,g\mu(f)\eta) -q \Omega_{p+q-1} (f,\eta(f)g\mu)\\  
&- \Omega_{p+q}(f,[g\eta,\mu] ) - p \Omega_{p+q-1} (f,\mu(f)g\eta) +q \Omega_{p+q-1} (f,g\eta(f)\mu) \\ 
= & \Omega_{p+q}(f,[\eta,g\mu] - [g\eta, \mu] ) = \Omega_{p+q}(f, \eta(g)\mu+  \mu(g) \eta)
\end{align*}

Now we proceed to item \eqref{itemlemma02:identitieslemma02}. We have that $[\eta,gh\eta] - [g\eta, h \eta ] = 2 h\eta(g)\eta$. Hence
\begin{align*}
& [\Omega_p (f,\eta), \Omega_q (f,gh\eta) ] - [\Omega_p (f,g\eta), \Omega_q (f,h\eta) ] \\
= & \Omega_{p+q}(f,[\eta,gh\eta] ) + p\Omega_{p+q-1}(f,gh\eta(f) \eta) -q \Omega_{p+q-1}(f,\eta(f)g h\eta) \\
& - \Omega_{p+q}(f,[g\eta,h\eta] ) - p \Omega_{p+q-1} (f,h\eta(f)g\eta) +q \Omega_{p+q-1}(f,g\eta(f) h \eta) \\
= & \Omega_{p+q}(f, [\eta,gh\eta] - [g\eta, h \eta ] ) = 2 \Omega_{p+q}(f, h\eta(g)\eta)
\end{align*}  

All other items follows from \eqref{itemlemma01:identitieslemma02} and \eqref{itemlemma02:identitieslemma02}.
\end{proof}

\begin{lemma}\label{lemma:bracketofpsiandderivation}
For all $f \in A$, $\eta, \mu \in \cV$, and $p \geq 1$, 
\[
[\Omega_{p}(f,\eta),  1\#  \mu ]  = \Omega_p(f,[\eta,\mu]) +p \Omega_{p-1}(f,\mu(f)\eta) - p\mu(f)  \Omega_{p-1}(f,\eta)
\]
\end{lemma}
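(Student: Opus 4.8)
The plan is to follow the same bookkeeping strategy used in Lemma~\ref{lemma:commutatorofomegas}, but now one of the two factors in the bracket is the ``degenerate'' element $1\#\mu$, which we may think of as $\Omega_0(f,\mu)$ (the $p=0$ case of the defining sum). So the statement should morally be the $q=0$ specialization of Lemma~\ref{lemma:commutatorofomegas}: the leading term $\Omega_{p+q}(f,[\eta,\mu])$ becomes $\Omega_p(f,[\eta,\mu])$, the term $p\,\Omega_{p+q-1}(f,\mu(f)\eta)$ becomes $p\,\Omega_{p-1}(f,\mu(f)\eta)$, the term $-q\,\Omega_{p+q-1}(f,\eta(f)\mu)$ vanishes since $q=0$, and a genuinely new term $-p\,\mu(f)\,\Omega_{p-1}(f,\eta)$ appears, which is exactly the contribution that was invisible before because for $q\geq 1$ it got absorbed into $\Omega_{p+q}$ by one of the binomial collapses. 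I will however just verify it directly rather than rely on that heuristic.

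\medskip

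First I would expand $[\Omega_p(f,\eta), 1\#\mu]$ term by term using the multiplication rule $(a\#\xi)(b\#\zeta) = a\xi(b)\#\zeta + ab\#\xi\zeta$ given in the preliminaries. Writing $\Omega_p(f,\eta) = \sum_{k=0}^p (-1)^k\binom{p}{k} f^{p-k}\# f^k\eta$, each summand contributes
\[
[f^{p-k}\# f^k\eta,\ 1\#\mu] = f^{p-k}\# [f^k\eta,\mu] - \mu(f^{p-k})\# f^k\eta,
\]
since $(f^{p-k}\# f^k\eta)(1\#\mu) = f^{p-k}\# f^k\eta\mu$ while $(1\#\mu)(f^{p-k}\# f^k\eta) = \mu(f^{p-k})\# f^k\eta + f^{p-k}\# \mu f^k\eta$. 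Now $[f^k\eta,\mu] = f^k[\eta,\mu] - k f^{k-1}\mu(f)\eta$ and $\mu(f^{p-k}) = (p-k)f^{p-k-1}\mu(f)$, so the $k$-th summand of the commutator is
\[
(-1)^k\binom{p}{k}\Bigl( f^{p-k}\# f^k[\eta,\mu] \;-\; k\,f^{p-1}\# f^{k-1}\mu(f)\eta \;-\; (p-k)\,\mu(f) f^{p-1-k}\# f^k\eta \Bigr).
\]

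\medskip

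Then I would collect the three resulting sums separately. The first sum is literally $\Omega_p(f,[\eta,\mu])$ by definition. For the second sum, reindexing $u=k-1$ and using $k\binom{p}{k} = p\binom{p-1}{k-1}$ turns $-\sum_k (-1)^k\binom{p}{k} k\, f^{(p-1)-(k-1)}\# f^{k-1}\mu(f)\eta$ into $p\sum_{u}(-1)^u\binom{p-1}{u} f^{(p-1)-u}\# f^u\mu(f)\eta = p\,\Omega_{p-1}(f,\mu(f)\eta)$ (the sign flip from $(-1)^k=-(-1)^{u}$ cancels the overall minus). For the third sum, the factor $\mu(f)$ is central in $A\#\cV$ (it lies in $A\#\bk$), so $-\sum_k(-1)^k\binom{p}{k}(p-k)\,f^{(p-1)-k}\# f^k\eta\cdot\mu(f)$: here I split $(p-k) = p - k$. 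The $p$-part gives $-p\,\mu(f)\sum_k(-1)^k\binom{p}{k} f^{(p-1)-k}\# f^k\eta$, which is \emph{not} quite $\Omega_{p-1}$ because the exponent in the first tensor slot is $(p-1)-k$ with $k$ ranging up to $p$, not up to $p-1$; the $k=p$ term is $(-1)^p f^{-1}\#\ldots$ which formally is spurious, so I must be careful --- in fact the cleanest route is to keep $(p-k)$ and note that when $k=p$ the coefficient $(p-k)$ kills that term, so the sum effectively runs $0\le k\le p-1$, and then $(p-k)\binom{p}{k} = p\binom{p-1}{k}$, giving exactly $-p\,\mu(f)\,\Omega_{p-1}(f,\eta)$. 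Assembling the three pieces yields the claimed identity. The only delicate point, and the one I would present carefully, is this last manipulation: making sure the index $k=p$ term is handled correctly (it is annihilated by the $(p-k)$ factor) so that the truncated sum genuinely equals $\Omega_{p-1}(f,\eta)$ rather than $\Omega_{p-1}(f,\eta)$ plus a bogus negative-exponent term; everything else is routine binomial bookkeeping of the same flavor as the preceding lemmas.
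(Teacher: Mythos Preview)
Your proof is correct and is exactly the direct binomial computation the paper gestures at when it says the lemma ``follows from arguments very similar to those given in Lemma~\ref{lemma:commutatorofomegas}.'' Two cosmetic slips worth fixing before submission: in the displayed three-term expansion the middle term should read $k\,f^{p-k}\# f^{k-1}\mu(f)\eta$ (not $f^{p-1}$; you have it right in the subsequent reindexing), and $\mu(f)\#1$ is not central in $A\#\cV$ --- what you actually use, and what is true, is that left multiplication by $\mu(f)\#1$ simply multiplies the $A$-slot, so it can be pulled out of the sum.
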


\begin{proof} This follows from arguments very similar to those given in Lemma \ref{lemma:commutatorofomegas}.
\end{proof}

\section{Annihilators of finite $A\cV$-modules}\label{section:003annihilators}

\begin{definition}
For an $A\cV$-module $M$, we define the \emph{annihilator} $\Ann(M)$ by
\[
\Ann(M) = \left \{ x \in A \# U(\cV) \mid xm = 0 \quad \text{for each} \ m \in M \right \}.
\]
\end{definition}

Note that $\Ann(M)$ is a left ideal of $A \# U(\cV)$, while $\Ann_A(M)$ is an ideal of $A$.

\begin{example} In the case of $M=\Omega^{1}_{A},$ we have that for all $f,\eta$, $\Omega_{2}(f,\eta)\in \Ann(M)$.\end{example}

We wish to eventually generalize the above phenomenon. More specifically we will prove that for all $f \in A$, there exists $N>0$ such that $\Omega_p(f,\eta) \in \Ann(M)$ for each $p>N$ and $\eta \in \cV$, \emph{given that} $M$ is of finite type. We will use the identities proven for the elements $\Omega_{p}$ above in order to do so.

\begin{lemma}\label{lemma:lemma001avmodules}
Let $M$ be a finite $A\cV$-module with $r = \rank_{A}(M)$, $f \in A$ and $\eta \in \cV$.  For any $p>r^2$, there exists $a_1,\dots,a_{r^2} \in A$ and $b\in A \setminus \{0\}$ such that $b\Omega_p(f,\eta) + \sum_{i=1}^{r^2} a_i \Omega_{i}(f,\eta) \in \mathrm{Ann} (M)$.
\end{lemma}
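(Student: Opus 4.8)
The plan is to exploit the fact that the operators $\Omega_i(f,\eta)$, acting on $M$, all lie in $\End_A(M)$ (Lemma \ref{lemma:omegaactioncommuteswitha}), together with the commutator identity of Lemma \ref{lemma:commutatorofomegas}, to produce a linear dependence relation over $A$ among $\Omega_1(f,\eta),\dots,\Omega_{r^2}(f,\eta),\Omega_p(f,\eta)$ once $p>r^2$. The source of the bound $r^2$ is that $M$ is generically free of rank $r$ (Proposition \ref{proposition:avmodisprojective}), so $\End_A(M)$ has rank $r^2$ over $A$; hence after tensoring with $\Frac(A)$ any $r^2+1$ elements of $\End_A(M)$ are linearly dependent over $\Frac(A)$, and clearing denominators yields an $A$-linear relation $b\,\Omega_p(f,\eta)+\sum_{i=1}^{r^2}a_i\,\Omega_i(f,\eta)=0$ in $\End_A(M)$, with $b\neq 0$. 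The subtlety is that a priori this only gives a dependence among \emph{some} $r^2+1$ of the $\Omega_i$'s, not necessarily the consecutive ones $\Omega_1,\dots,\Omega_{r^2}$ together with $\Omega_p$; the real work is to promote an arbitrary dependence to one of this specific shape.

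First I would record the reduction to $\End_A(M)$: since $M$ is a finite projective $A$-module of rank $r$ (Proposition \ref{proposition:avmodisprojective}), the $A$-module $\End_A(M)$ is finite projective of rank $r^2$, and in particular any $r^2+1$ elements are $\Frac(A)$-linearly dependent. Apply this to the elements $\Omega_1(f,\eta),\dots,\Omega_{r^2+1}(f,\eta)\in\End_A(M)$: there exist $c_1,\dots,c_{r^2+1}\in A$, not all zero, with $\sum_{i=1}^{r^2+1}c_i\,\Omega_i(f,\eta)\in\Ann(M)$. Let $j$ be the largest index with $c_j\neq 0$. If $j=r^2+1$ we are already done with $p=r^2+1$, so assume the relation involves only $\Omega_1,\dots,\Omega_j$ with $j\leq r^2$ and $c_j\neq 0$. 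The plan is now to "push the relation forward" from top index $j$ to top index $p$ for any $p>r^2$.

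The key step is the following propagation mechanism, using Lemma \ref{lemma:commutatorofomegas} with $\mu=\eta$, or more precisely its specialization in Lemma \ref{lemma:bracketrelations001}\eqref{itemlemma03:identitieslemma02}: for suitable $g\in A$ (chosen so that $\eta(g)\neq 0$, which is possible because $\eta\neq 0$ and $A$ is a domain — if no such $g$ existed then $\eta$ would kill $A$), one has
\[
[\Omega_p(f,\eta),\Omega_q(f,g\eta)]-[\Omega_p(f,g\eta),\Omega_q(f,\eta)]=2\,\Omega_{p+q}(f,\eta(g)\eta).
\]
Bracketing the existing relation $\sum_{i\le j}c_i\Omega_i(f,\eta)\in\Ann(M)$ against $\Omega_q(f,g\eta)$ and against $\Omega_q(f,\eta)$ (these brackets stay in $\Ann(M)$ since $\Ann(M)$ is a left ideal and the $\Omega$'s act as $A$-linear maps that we may also multiply by elements of $A$), and using Lemma \ref{lemma:commutatorofomegas} to expand each $[\Omega_i(f,\eta),\Omega_q(f,g\eta)]$ and $[\Omega_i(f,g\eta),\Omega_q(f,\eta)]$ as an $A$-combination of $\Omega$'s of degrees $\le i+q$, I get a new element of $\Ann(M)$ which is an $A$-combination of $\Omega_1(f,\eta),\dots,\Omega_{j+q}(f,\eta)$ whose top coefficient (at $\Omega_{j+q}(f,\eta)$) is, up to a nonzero scalar, $c_j\cdot\eta(g)\neq 0$. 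Thus from a relation of top degree $j$ I obtain, for every $q\ge 1$, a relation of top degree exactly $j+q$ with nonzero (in fact $b=$ nonzero element of $A$) leading coefficient. Iterating, choosing $q$ so that $j+q=p$, and renaming coefficients gives the desired $b\,\Omega_p(f,\eta)+\sum_{i=1}^{r^2}a_i\,\Omega_i(f,\eta)\in\Ann(M)$ with $b\in A\setminus\{0\}$; any terms $\Omega_i$ with $r^2<i<p$ appearing en route can be reabsorbed because the relation of top degree $r^2+1\le i$ we already have lets us rewrite each such $\Omega_i$, or more simply one just runs the induction keeping the invariant "there is a relation of top degree $n$ with nonzero leading coefficient" for every $n\ge j$.

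The main obstacle is bookkeeping the leading coefficient through the bracket expansions: one must check that in $[\Omega_i(f,\eta),\Omega_q(f,g\eta)]-[\Omega_i(f,g\eta),\Omega_q(f,\eta)]$ the degree-$(i+q)$ part is exactly $\Omega_{i+q}(f,\eta(g)\eta)$ with no other degree-$(i+q)$ contributions and no cancellation of the leading term when $i=j$ — this is precisely what item \eqref{itemlemma03:identitieslemma02} of Lemma \ref{lemma:bracketrelations001} (with $p,q$ there equal to $i,q$) provides, since $\eta(g)\neq 0$ and $A$ is a domain forces the coefficient to be nonzero. A minor point to handle is the base of the induction when the original dependence might be trivial on $\Omega_1,\dots,\Omega_{r^2}$ alone (i.e. $j$ could be small), but since we only need \emph{some} nonzero relation and the rank count guarantees one exists among $\Omega_1,\dots,\Omega_{r^2+1}$, this is not an issue.
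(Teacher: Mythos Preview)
Your opening is exactly the paper's argument, and in fact the paper's proof \emph{is} nothing more than that opening: $\End_A(M)$ is finitely generated of rank at most $r^2$, the $\Omega_i(f,\eta)$ lie in $\End_A(M)$ by Lemma~\ref{lemma:omegaactioncommuteswitha}, so the $r^2+1$ elements $\Omega_1(f,\eta),\dots,\Omega_{r^2}(f,\eta),\Omega_p(f,\eta)$ are $\Frac(A)$-linearly dependent, and clearing denominators gives an $A$-relation. The paper simply asserts $b\neq 0$ and stops; it does not carry out any propagation.

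Your attempt to secure $b\neq 0$ via bracket propagation does not work. To use item~\eqref{itemlemma03:identitieslemma02} of Lemma~\ref{lemma:bracketrelations001} you would need both $\sum_i c_i[\Omega_i(f,\eta),\Omega_q(f,g\eta)]$ and $\sum_i c_i[\Omega_i(f,g\eta),\Omega_q(f,\eta)]$ to lie in $\Ann(M)$. The first does, since $R=\sum_i c_i\Omega_i(f,\eta)\in\Ann(M)$; the second would require $\sum_i c_i\Omega_i(f,g\eta)\in\Ann(M)$, which you have no reason to know --- the $c_i$ came from a dependence among the $\Omega_i(f,\eta)$, not among the $\Omega_i(f,g\eta)$. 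Even granting that step, the output is $2\sum_i c_i\Omega_{i+q}(f,\eta(g)\eta)$, a relation among the $\Omega_\bullet(f,\eta(g)\eta)$ for a \emph{different} vector field $\eta(g)\eta$; since $\Omega_k(f,h\eta)$ is not an $A$-multiple of $\Omega_k(f,\eta)$ (the $\cV$-action on $M$ is not $A$-linear), this is not of the form $b\,\Omega_p(f,\eta)+\sum a_i\Omega_i(f,\eta)$ the lemma demands. So the propagation neither stays in $\Ann(M)$ as claimed nor preserves the vector field argument, and the ``reabsorption'' of intermediate $\Omega_i$'s at the end presupposes exactly the kind of relation you are trying to build. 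The paper's route is just the bare rank argument; if you want to be scrupulous about $b\neq 0$, note that all that is actually used downstream (Lemma~\ref{lemma:argument1}) is a nontrivial dependence among these $r^2+1$ operators, and the elimination there can be run from whichever coefficient is nonzero.
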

\begin{proof}
Since $M$ is finitely generated with rank $r$, we have that $\End_A(M)$ is a finitely generated $A$-module with rank at most $r^2$. By Lemma \ref{lemma:omegaactioncommuteswitha}, the action of $\Omega_i(f,\eta)$ commutes with the action of $A$ in $M$ for each $i\in \{1,2,\dots,r^2, p\} $, we see that $\{\Omega_k(f,\eta)\mid k =1,\dots,r^2 \} \cup \{\Omega_p(f,\eta) \} $ defines a family of endomorphisms in $\End_A(M)$. Therefore, it must be $A$-linearly dependent. Thus, there exists $a_1,\dots,a_N \in A$ and $b \in A$, $b \neq 0$, such that $b\Omega_p(f,\eta) + \sum_{i=1}^{r^2} a_i \Omega_{i}(f,\eta) \in \mathrm{Ann} (M)$.
\end{proof}

\begin{lemma}\label{lemma:argument1}
Let $M$ be a finite $A\cV$-module with rank $r$, $f \in A$ and $\eta \in \cV$ such that $\eta(f) \neq 0$. Then exists $ N \geq r^2$ that depends on $r$ such that $\Omega_p(f,\eta(f)^{r^2}\eta) \in \mathrm{Ann} (M)$ for all $p \geq N$.
\end{lemma}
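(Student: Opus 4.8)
The plan is to bootstrap from Lemma \ref{lemma:lemma001avmodules} using the bracket identities of Lemma \ref{lemma:bracketrelations001} to eliminate the low-order terms $\Omega_1,\dots,\Omega_{r^2}$ one at a time, at the cost of multiplying $\eta$ by powers of $\eta(f)$. Concretely, Lemma \ref{lemma:lemma001avmodules} gives us, for any $p>r^2$, a relation $b\,\Omega_p(f,\eta)+\sum_{i=1}^{r^2}a_i\,\Omega_i(f,\eta)\in\Ann(M)$ with $b\neq 0$. The idea is to apply the same lemma not to $\eta$ itself but to the rescaled vector fields $\eta(f)^j\eta$, and then take brackets of the resulting annihilator elements against elements of the form $1\#\mu$ or against other $\Omega$'s, using the identities to show that suitable $A$-combinations of the $\Omega_p(f,\eta(f)^{r^2}\eta)$ for varying $p$ land in $\Ann(M)$, and that the ``leading'' coefficient can be taken to be a power of $\eta(f)$ (which is nonzero, hence a nonzerodivisor on $M$ since $\Tor_A(M)=0$ by Proposition \ref{proposition:avmodisprojective}).

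First I would record the key mechanism: given an element $x\in\Ann(M)$ that is an $A$-combination of $\Omega_p(f,\eta)$'s, bracketing with $1\#\mu$ produces, via Lemma \ref{lemma:bracketofpsiandderivation}, a new element of $\Ann(M)$ (since $\Ann(M)$ is a left ideal and $[x,1\#\mu]=x(1\#\mu)-(1\#\mu)x$ — here one must check that $(1\#\mu)x\in\Ann(M)$ too, which holds because $x$ acts as $0$; actually more carefully $x\in\Ann(M)$ left ideal gives $(1\#\mu)x\notin\Ann(M)$ in general, so instead I would use that $xm=0$ for all $m$ implies $[x,1\#\mu]m = -x\mu m=0$). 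This shifts the index down by one and replaces $\eta$ by $[\eta,\mu]$ plus lower-order correction terms with coefficients involving $\mu(f)$. Choosing $\mu=g\eta$ for appropriate $g\in A$ and invoking the symmetrized identities (3) and (5) of Lemma \ref{lemma:bracketrelations001}, I would arrange that the commutators collapse the dependence on the lower-order $\Omega_i$ and feed powers of $\eta(f)$ into the vector-field slot. Iterating $r^2$ times should clear all of $\Omega_1,\dots,\Omega_{r^2}$, leaving a relation of the shape $c\,\Omega_p(f,\eta(f)^{r^2}\eta)\in\Ann(M)$ for all large $p$, where $c$ is a power of $\eta(f)$ up to a unit; cancelling $c$ (legitimate since $M$ is torsion-free) yields the claim with $N$ the number of iterations plus $r^2$, a quantity depending only on $r$.

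The main obstacle I anticipate is bookkeeping: controlling exactly how the index $p$ and the vector field transform under each bracket, and ensuring that after all the eliminations the surviving coefficient is genuinely a power of $\eta(f)$ (so that torsion-freeness applies) rather than some other nonzero element of $A$ whose non-vanishing we cannot guarantee. A secondary subtlety is that the bracket identities in Lemma \ref{lemma:bracketrelations001} relate brackets of two $\Omega$'s, so to use them I need both factors to be annihilator elements or to carefully track the non-annihilator contributions; I would handle this by always bracketing an annihilator element against $1\#\mu$ (using Lemma \ref{lemma:bracketofpsiandderivation}) rather than against another $\Omega$, and only using Lemma \ref{lemma:bracketrelations001} as an algebraic identity inside $A\#U(\cV)$ to rewrite the output. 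Getting the uniform bound $N$ to depend only on $r$ — not on $f$ or $\eta$ — falls out automatically since each step lowers the top index by a fixed amount and there are exactly $r^2$ lower terms to eliminate.
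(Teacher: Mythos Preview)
Your overall skeleton is right—start from Lemma~\ref{lemma:lemma001avmodules}, eliminate the low-order $\Omega_i$'s one at a time, and cancel the surviving nonzero $A$-coefficient via torsion-freeness—but the elimination mechanism you commit to is the wrong one, and the reason you reject the correct one is based on a mistaken worry.

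The paper does \emph{not} bracket the annihilator element $x=\sum_i a_i\Omega_{m_i}(f,\eta)$ against $1\#\mu$. It brackets $x$ against $\Omega_{m_1}(f,\eta)$ itself. The point is the specialization of Lemma~\ref{lemma:commutatorofomegas} to $\mu=\eta$:
\[
[\Omega_a(f,\eta),\Omega_b(f,\eta)]=(a-b)\,\Omega_{a+b-1}(f,\eta(f)\eta),
\]
so multiplying $x$ on the left by $\Omega_{m_1}(f,\eta)$ and using that $x$ already kills $M$ leaves $\sum_{i\ge2}a_i(m_1-m_i)\Omega_{m_1+m_i-1}(f,\eta(f)\eta)\in\Ann(M)$. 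The $i=1$ term drops out for free because $m_1-m_1=0$, and the top coefficient $a_{r^2+1}(m_1-p)$ stays nonzero. Iterating with $\Omega_{m_1+m_2-1}(f,\eta(f)\eta)$, etc., kills one more term each time, Vandermonde-style, while pushing powers of $\eta(f)$ into the vector-field slot. After $r^2$ steps only the top term survives. Your proposed route via $[x,1\#\mu]$ and Lemma~\ref{lemma:bracketofpsiandderivation} produces $\Omega_{m_i}(f,[\eta,\mu])$ terms with a \emph{new} vector field together with $\mu(a_i)$ corrections, and there is no evident choice of $\mu$ that makes a single $\Omega_i$ drop out cleanly; you never explain how that cancellation would actually happen.

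Two smaller points. First, your concern that bracketing with a non-annihilator $\Omega$ is problematic is unfounded: if $x\in\Ann(M)$ then for \emph{any} $y\in A\#U(\cV)$ one has $yxm=0$ and $xym=x(ym)=0$, so $[y,x]\in\Ann(M)$; $\Ann(M)$ is in fact two-sided. Second, the surviving coefficient after elimination need not be a power of $\eta(f)$—in the paper it is $b$ times a product of nonzero integers—but torsion-freeness (Proposition~\ref{proposition:avmodisprojective}) lets you cancel any nonzero element of $A$, so this worry is also misplaced.
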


\begin{proof}
Let $m>r^2$, then by Lemma \ref{lemma:lemma001avmodules} there exists $a_1,\dots,a_{r^2} \in A$ non all zero and $a_{r^2+1} \in A \setminus \{0\}$ such that $\sum_{i=1}^{r^2+1} a_i \Omega_{m_i}(f,\eta) \in \mathrm{Ann} (M)$ where $m_i=i$ if $i \leq r ^2$ and $m_i = p$ if $i=r^2+1$. Thus, for every $m \in M$,
\begin{align*}
    0 & = \Omega_{m_1}(f,\eta)\left (\sum_{i=1}^{r^2+1} a_i \Omega_{m_i}(f,\eta) \right )m \\ 
    & =  \left (\sum_{i=1}^{r^2+1} a_i \Omega_{m_i}(f,\eta) \right ) \Omega_{m_1}(f,\eta) m +\left (\sum_{i=1}^{r^2 +1} a_i \left [ \Omega_{m_1}(f,\eta), \Omega_{m_i}(f,\eta) \right ]\right )m \\
    & = \left (\sum_{i=2}^{r^2+1} a_i (m_1 - m_i) \Omega_{m_1+m_i-1} (f,\eta(f)\eta) \right ) m.
\end{align*}
Therefore, $\sum_{i=2}^{r^2+1} a_i (m_1 - m_i) \Omega_{m_1+m_i-1} (f,\eta(f)\eta) \in \mathrm{Ann}(M)$. Now, for all $m \in M$
\begin{align*}
   0 & = \Omega_{m_1+m_2-1}(f,\eta(f)\eta) \left (\sum_{i=2}^{r^2+1} a_i (m_1 - m_i) \Omega_{m_1+m_i-1} (f,\eta(f)\eta) \right ) m \\
   & = \left (\sum_{i=2}^{r^2+1 } a_i(m_1 - m_i) [\Omega_{m_1+m_2-1}(f,\eta(f)\eta) , \Omega_{m_1+m_i-1} (f,\eta(f)\eta) \right ) m \\
   & = \left (\sum_{i=3}^{r^2+1} a_i (m_1 - m_i)(m_2-m_i) \Omega_{2m_1+m_2+m_i-2}(f,\eta(f)^2\eta) \right ) m .
\end{align*}
We may do this process $r^2$ times to conclude that $b\Omega_{p+N}(f,\eta(f)^{r^2}\eta)  \in \mathrm{Ann}(M)$ for some $0 \neq b \in A $ and $N \geq 0$ that depends on $r$. Since $\mathrm{Tor}_A(M) = 0$, $\Omega_{m+N}(f,\eta(f)^{r^2}\eta)  \in \mathrm{Ann}(M)$. This holds for every $p > r^2$, $\Omega_{k+r^2+N}(f,\eta(f)^{r^2}\eta)  \in \mathrm{Ann}(M)$ for every $k\geq 1$.
\end{proof}

\begin{corollary}\label{corollary:thereexistsderivation}
Let $M$ be an $A\cV$-module. For all $f \in A$ that exists $\mu \in \cV$ with $\mu(f)\neq 0$, there exists $\eta \in \cV$ with $\eta(f)\neq 0$ such that $\Omega_p(f,\eta) \in \mathrm{Ann}(M)$ for every $p>N$.
\end{corollary}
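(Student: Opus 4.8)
The plan is to deduce this from Lemma~\ref{lemma:argument1} together with item~\eqref{itemlemma01:identitieslemma02} of Lemma~\ref{lemma:bracketrelations001}. Fix $f \in A$ admitting some $\mu \in \cV$ with $\mu(f) \neq 0$. By Lemma~\ref{lemma:argument1} applied to $\mu$, there is an integer $N \geq r^2$ (depending only on $r = \rank_A(M)$) such that
\[
\Omega_p\bigl(f,\mu(f)^{r^2}\mu\bigr) \in \Ann(M) \qquad \text{for all } p \geq N.
\]
Set $\eta = \mu(f)^{r^2}\mu \in \cV$. Then $\eta(f) = \mu(f)^{r^2}\mu(f) = \mu(f)^{r^2+1} \neq 0$ because $A$ is an integral domain. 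This $\eta$ is the required vector field: we already know $\Omega_p(f,\eta) \in \Ann(M)$ for every $p \geq N$, and $\eta(f) \neq 0$, which is exactly the assertion.

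In fact this is immediate enough that the corollary is essentially a restatement of Lemma~\ref{lemma:argument1} with the auxiliary vector field absorbed into the statement; the only point worth spelling out is that multiplying $\mu$ by the nonzero element $\mu(f)^{r^2}$ keeps us inside $\cV$ (since $\cV$ is an $A$-module) and keeps $\eta(f)$ nonzero (since $A$ has no zero divisors, Lemma~\ref{lemma:annihilatoriszero} being irrelevant here but the domain property from Section~\ref{section:001preliminars} being what we use). One could alternatively phrase the conclusion with the same threshold $N$ as in Lemma~\ref{lemma:argument1}, or note that by the last sentence of that lemma's proof one even gets $\Omega_{k+r^2+N'}(f,\eta) \in \Ann(M)$ for all $k \geq 1$ with the $N'$ there; either way "for every $p > N$" holds for a suitable $N$.

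There is no real obstacle here: the substantive work was done in Lemma~\ref{lemma:argument1}, whose proof bootstraps from the $A$-linear dependence of the finitely many operators $\Omega_i(f,\eta)$ in $\End_A(M)$ (Lemma~\ref{lemma:lemma001avmodules}) by repeatedly taking commutators and using the bracket identity of Lemma~\ref{lemma:commutatorofomegas} specialized via Lemma~\ref{lemma:bracketrelations001}\eqref{itemlemma03:identitieslemma02} to produce higher powers of $\eta(f)$. The corollary just records that the hypothesis "$\exists \mu$ with $\mu(f) \neq 0$" (which always holds when $f$ is nonconstant, by smoothness of $X$) suffices to produce \emph{some} such $\eta$, now with the explicit control on the $\Omega_p$; this is the form that will be convenient when, in Section~\ref{section:004localization}, we glue these annihilation statements over an open cover to show the $\cV$-action sheafifies.
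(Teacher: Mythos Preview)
Your proof is correct and is exactly the argument the paper intends: the corollary is an immediate restatement of Lemma~\ref{lemma:argument1}, taking $\eta = \mu(f)^{r^2}\mu$ and noting $\eta(f)=\mu(f)^{r^2+1}\neq 0$ in the domain $A$. (The mention of Lemma~\ref{lemma:bracketrelations001}\eqref{itemlemma01:identitieslemma02} in your opening sentence is superfluous---your argument never uses it---and, as in the paper, the finiteness hypothesis on $M$ from Lemma~\ref{lemma:argument1} is tacitly in force.)
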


\begin{lemma}\label{lemma:itemsonannihilator001}
Let $M$ be an $A\cV$-module, $f\in A$, and $\eta \in \cV$. Suppose that for each $p >N$ $\Omega_p(f,\eta) \in \mathrm{Ann}(M)$ for some $N>0$. Then for all $g,h \in A$ 
\begin{enumerate}
    \item $\Omega_{2N+1+k} (f,  \eta(g) \eta ) \in \mathrm{Ann}(M)$ for all $k \geq 1$;
    \item $\Omega_{3N+2+k}(f,\eta(g)\eta(h) \eta) \in \mathrm{Ann}(M)$ for all $k \geq 1$;
    \item $\Omega_{3N+2+k} (f, g \eta(\eta(h)) \eta ) \in \mathrm{Ann}(M)$ for all $k \geq 1$.
\end{enumerate}
\end{lemma}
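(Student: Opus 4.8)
The plan is to obtain all three items by combining the identities of Lemma \ref{lemma:bracketrelations001} with a bracketing trick analogous to the one used in Lemma \ref{lemma:argument1}: whenever $\Omega_a(f,\xi)$ and $\Omega_b(f,\xi')$ both lie in $\Ann(M)$, so does $[\Omega_a(f,\xi),\Omega_b(f,\xi')]$, because $\Ann(M)$ is a left ideal and the bracket of two annihilating operators annihilates $M$. So the strategy is to write the desired $\Omega_{*}(f,\zeta)$ as a $\bk$-linear combination of brackets $[\Omega_p(f,\cdot),\Omega_q(f,\cdot)]$ in which every entry already has its first index (or its partner in the difference) above $N$, and then read off the resulting index bound.

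For item (1), I would use Lemma \ref{lemma:bracketrelations001}\eqref{itemlemma03:identitieslemma02}:
\[
2\,\Omega_{p+q}(f,\eta(g)\eta) = [\Omega_p(f,\eta),\Omega_q(f,g\eta)] - [\Omega_p(f,g\eta),\Omega_q(f,\eta)].
\]
Taking $p=q=N+1$ and, more generally, $p=N+1$, $q=N+1+j$ for $j\ge 0$, each of the four operators on the right has first argument $\eta$ or $g\eta$ and index $N+1$ or larger — but the crucial point is that in each bracket at least one factor is of the form $\Omega_{\ge N+1}(f,\eta)$, which lies in $\Ann(M)$ by hypothesis; hence each bracket lies in $\Ann(M)$, and therefore so does $\Omega_{p+q}(f,\eta(g)\eta)$. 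As $p+q$ ranges over all integers $\ge 2N+2 = (2N+1)+1$, this gives $\Omega_{2N+1+k}(f,\eta(g)\eta)\in\Ann(M)$ for all $k\ge 1$.

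For item (2), I would feed the conclusion of item (1) back in. Apply Lemma \ref{lemma:bracketrelations001}\eqref{itemlemma04:identitieslemma02}:
\[
2\,\Omega_{p+q}(f,\eta(g)\eta(h)\eta) = [\Omega_p(f,g\eta),\Omega_q(f,\eta(h)\eta)] - [\Omega_p(f,\eta),\Omega_q(f,g\eta(h)\eta)].
\]
In the first bracket the factor $\Omega_q(f,\eta(h)\eta)$ lies in $\Ann(M)$ once $q\ge 2N+2$ by item (1); in the second bracket $\Omega_p(f,\eta)$ lies in $\Ann(M)$ once $p\ge N+1$. Choosing $p=N+1$ and $q=2N+2+j$ for $j\ge 0$ makes both brackets annihilate $M$, so $\Omega_{p+q}(f,\eta(g)\eta(h)\eta)\in\Ann(M)$ for all $p+q\ge 3N+3 = (3N+2)+1$, i.e.\ for the stated range. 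Item (3) is then immediate from item (2) together with the purely algebraic identity Lemma \ref{lemma:bracketrelations001}\eqref{itemlemma05:identitieslemma02} (with $\mu=\eta$), which expresses $\Omega_{p+q}(f,g\,\eta(\eta(h))\eta)$ as a difference of $\Omega_{p+q}(f,\eta(g\,\eta(h))\eta)$ — of the form handled by item (1), so in $\Ann(M)$ for index $\ge 2N+2$ — and $\Omega_{p+q}(f,\eta(g)\eta(h)\eta)$, which is in $\Ann(M)$ for index $\ge 3N+3$ by item (2); the larger of the two bounds, $3N+2+k$, governs.

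The routine but slightly delicate point to get right is the bookkeeping of indices: each application of Lemma \ref{lemma:commutatorofomegas}-type identities drops the index by one (the terms $\Omega_{p+q-1}$), so one must check that after expanding the brackets in Lemma \ref{lemma:bracketrelations001} every summand that is \emph{not} of the target shape still has an index strictly above $N$ in a factor that makes it annihilate $M$ — this is automatic here because those extra summands are exactly of the form $\Omega_{p+q-1}(f,(\text{something})\eta)$ with $p+q-1\ge N+1$ and one factor being a plain $\Omega_{\ge N+1}(f,\eta)$. I expect this index accounting to be the only real obstacle; the Lie-theoretic content is entirely packaged in Lemma \ref{lemma:bracketrelations001} and the left-ideal property of $\Ann(M)$.
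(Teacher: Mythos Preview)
Your argument is correct and matches the paper's approach exactly: the paper's proof simply says the lemma follows from items \eqref{itemlemma03:identitieslemma02}, \eqref{itemlemma04:identitieslemma02}, \eqref{itemlemma05:identitieslemma02} of Lemma~\ref{lemma:bracketrelations001}, and you have supplied precisely the index bookkeeping that makes that citation work. One small clarification: your final paragraph about ``extra summands $\Omega_{p+q-1}$'' is unnecessary, since the identities in Lemma~\ref{lemma:bracketrelations001} are exact identities in $A\#\cV$ (the $\Omega_{p+q-1}$ terms already cancelled in that lemma's proof), so there is nothing further to track once you invoke those identities directly.
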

\begin{proof}
It follows from items \eqref{itemlemma03:identitieslemma02}, \eqref{itemlemma04:identitieslemma02} and \eqref{itemlemma05:identitieslemma02}  of Lemma \ref{lemma:bracketrelations001}.
\end{proof}

\begin{proposition}\label{proposition:idealannm}
Let $M$ be an $A\cV$-module, and $f \in A$. Let  $\eta \in \cV$ with $\eta(f)\neq 0$ and $N >0$ such that $\Omega_p(f,\eta) \in \mathrm{Ann}(M)$ for every $p>N$. Let $g,h \in A$ and $I_{g,h,f,\eta}$ be the principal ideal of $A$ generated by $\eta(g)\eta(\eta(h))$. Then for every $q \in I_{g,h,f,\eta}$, $\tau \in \cV$ and $p > 3N+4$, we have that $\Omega_{p}(f,q\tau) \in \mathrm{Ann}(M)$.
\end{proposition}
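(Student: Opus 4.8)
The plan is to reduce the claim to the three membership statements already recorded in Lemma \ref{lemma:itemsonannihilator001}, and then to bootstrap from the ``generator'' $\eta(g)\eta(\eta(h))$ to an arbitrary element $q$ of the principal ideal it generates, and from the fixed derivation $\eta$ to an arbitrary $\tau\in\cV$. First I would observe that by Lemma \ref{lemma:itemsonannihilator001}(3) we already have $\Omega_{3N+2+k}(f, g\,\eta(\eta(h))\,\eta)\in\Ann(M)$ for all $k\ge 1$, and similarly by part (2) we control $\Omega_{3N+2+k}(f,\eta(g)\eta(h)\eta)$; combining these with identity \eqref{itemlemma05:identitieslemma02} of Lemma \ref{lemma:bracketrelations001} gives $\Omega_{p}(f,\,\eta(g)\eta(\eta(h))\,\eta)\in\Ann(M)$ for $p$ large enough (say $p>3N+3$), i.e. the ``generator times $\eta$'' already lies in the annihilator. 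So the issue is genuinely to (i) replace $\eta(g)\eta(\eta(h))$ by $aq$-type multiples, i.e. absorb an arbitrary $a\in A$, and (ii) replace the final $\eta$ by an arbitrary $\tau$.

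For step (ii), the key tool is Lemma \ref{lemma:bracketofpsiandderivation}: bracketing $\Omega_p(f, q'\eta)$ (where $q'=\eta(g)\eta(\eta(h))$, already known to be in $\Ann(M)$ for $p$ large) against $1\#\mu$ produces $\Omega_p(f,[q'\eta,\mu]) + p\,\Omega_{p-1}(f,\mu(f)q'\eta) - p\,\mu(f)\Omega_{p-1}(f,q'\eta)$. Since the left side is in $\Ann(M)$ and the last two terms on the right are in $\Ann(M)$ (they are of the form $\Omega_\bullet(f, (\text{something})\cdot q'\eta)$ with $q'$ still a factor — here I use that $\Ann(M)$ is a left ideal stable under left multiplication by $A$), we deduce $\Omega_p(f,[q'\eta,\mu])\in\Ann(M)$. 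Now $[q'\eta,\mu] = q'[\eta,\mu] - \mu(q')\eta$, and $q'[\eta,\mu]$ ranges over $q'\cdot\cV$ as $\mu$ ranges over $\cV$ modulo the $A\eta$-direction; iterating and combining with the multiples $a\cdot(q'\eta)$ obtained from left multiplication should let me reach $\Omega_p(f, q\tau)$ for every $q\in I_{g,h,f,\eta}$ and every $\tau\in\cV$, at the cost of increasing the threshold $N$ by a bounded amount (arriving at $p>3N+4$). Step (i) — absorbing an arbitrary coefficient $a\in A$ so that $q=a\,\eta(g)\eta(\eta(h))$ — is handled simply by left-multiplying by $a\#1$, using Lemma \ref{lemma:omegaactioncommuteswitha} to see $a\,\Omega_p(f,q'\tau)=\Omega_p(f,aq'\tau)$ modulo $\Ann(M)$, so this direction is routine.

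I expect the main obstacle to be the bookkeeping in step (ii): ensuring that every time I bracket with $1\#\mu$ to ``rotate'' the derivation, the error terms stay inside $\Ann(M)$ and that the factor $q'$ (or a power of it) is never lost, while keeping the shift in the index $p$ uniformly bounded so that the final threshold $3N+4$ is correct. One subtlety is that $[q'\eta,\mu]$ is not an arbitrary element of $q'\cV$ — it is $q'[\eta,\mu]-\mu(q')\eta$, and the second summand has the ``wrong'' direction $\eta$ but still carries no factor of $q'$; I will need to argue that the $\cV$-span of $\{[\eta,\mu] : \mu\in\cV\}$, together with the already-available direction $\eta$ itself and $A$-multiples, exhausts enough of $\cV$ after multiplying by $q'$ — this uses that $\cV=\Der(A)$ is large (e.g. that $[\eta,\cV]+A\eta$ generates $\cV$ as an $A$-module, which follows from $\eta(f)\ne 0$ together with $A$ being an integral domain, or can be circumvented by also invoking Lemma \ref{lemma:bracketrelations001}\eqref{itemlemma01:identitieslemma02} to produce the symmetric combinations $\eta(a)\tau+\tau(a)\eta$). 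Once the list of admissible directions is seen to span $q'\cdot\cV$ over $A$, the proposition follows by collecting the (finitely many) index shifts into the single bound $p>3N+4$.
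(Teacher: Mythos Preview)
There is a genuine gap in your step (i): the identity $a\,\Omega_p(f,q'\tau)=\Omega_p(f,aq'\tau)$ is false, and Lemma~\ref{lemma:omegaactioncommuteswitha} does not yield it. That lemma only says that $\Omega_p(f,\eta)$ commutes with the action of $a\#1$ on $M$; it does not let you move $a$ into the $\cV$-slot (concretely, $af^{p-k}\#f^k\eta\neq f^{p-k}\#af^k\eta$ in $A\#U(\cV)$, and they act differently on $M$ unless $M$ is already a $D$-module). For the special direction $\tau=\eta$ one can instead invoke Lemma~\ref{lemma:itemsonannihilator001}(3), which already gives $\Omega_{3N+2+k}(f,\,b\,\eta(\eta(h))\,\eta)\in\Ann(M)$ for \emph{every} $b\in A$; but for general $\tau$ you have no such statement, so step (i) as written fails. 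Step (ii) has a related problem: bracketing with $1\#\mu$ produces $\Omega_p(f,q'[\eta,\mu]-\mu(q')\eta)$, and $\mu(q')=\mu(\eta(g))\,\eta(\eta(h))+\eta(g)\,\mu(\eta(\eta(h)))$ contains the summand $\eta(g)\,\mu(\eta(\eta(h)))$, which is not of any shape controlled by Lemma~\ref{lemma:itemsonannihilator001}, so you cannot split off the unwanted term. Your proposed spanning argument for $[\eta,\cV]+A\eta$ does not rescue this: even granting such a span, exploiting it would again require the false $A$-linearity of $\eta'\mapsto\Omega_p(f,\eta')$.

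The paper sidesteps both issues by using Lemma~\ref{lemma:bracketrelations001}\eqref{itemlemma01:identitieslemma02} (which you mention only as a fallback) as the \emph{main} tool, applied with $\eta'=a\,\eta(\eta(h))\,\eta$ and an arbitrary $\tau$. Since Lemma~\ref{lemma:itemsonannihilator001}(3) kills $\Omega_{3N+2+k}(f,\eta')$ and $\Omega_{3N+2+k}(f,g\eta')$ for every $a\in A$, the bracket side of \eqref{itemlemma01:identitieslemma02} lies in $\Ann(M)$; the cross term $\tau(g)\eta'$ on the right is again of the form $b\,\eta(\eta(h))\,\eta$ and hence also controlled; and what remains is exactly $\Omega_{3N+2+k+l}(f,\eta'(g)\tau)=\Omega_{3N+2+k+l}(f,\,a\,\eta(g)\eta(\eta(h))\,\tau)$. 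Thus the arbitrary coefficient $a$ and the arbitrary direction $\tau$ are obtained simultaneously in a single step, giving the bound $p>3N+4$.
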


\begin{proof}
The ideal $I_{g,h,f,\eta}$ is generated by elements with the form $q = p \eta(g)\eta(\eta(h))$ with $p \in A$. By Lemma \ref{lemma:bracketrelations001}  and Lemma \ref{lemma:itemsonannihilator001},
\begin{align*}
    & [\Omega_{3N+2+k}(f,p \eta(\eta(h)) \eta ), \Omega_l ( f, g \tau)  ] - [\Omega_{3N+2+k}(f,gp\eta(\eta(h)) \eta), \Omega_l(f, \tau) ] \\ & - \Omega_{3N+2+k+l} (f, \tau(g) p \eta(\eta(h)) \eta )\\
    = & \Omega_{3N+2+k+l}(f,\tau(g) p \eta(\eta(h)) \eta +p \eta(g) \eta(\eta(h)) \tau) - \Omega_{3N+2+k+l} (f, \tau(g) p \eta(\eta(h)) \\
    = & \Omega_{3N+2+k+l}(f, q \tau) \in \mathrm{Ann}(M)
\end{align*}
for each $k,l \geq 1$, and $\tau \in \cV$. Therefore, $\Omega_{3N+4+k}(f, q \tau) \in \Ann(M)$ for every $k \geq 1$, $q \in I_{g,h,f,\eta}$, $\tau \in \cV$.
\end{proof}

\begin{definition}
For an ideal $I$ of $A$, define $I^{(0)} = I$ and $I^{(k)} $ as the ideal of $A$ generated $\{ g,\mu(g) \mid g \in I^{(k-1)}, \ \mu \in \cV \}$.
\end{definition}

\begin{lemma}\label{lemma:lemma02applicationstoavmodules}
Let $M$ be an $A\cV$-module, and $f \in A$. Suppose that $I$ is an ideal of $A$ such that $\Omega_p(f,q\tau) \in \Ann(M)$ for every $p>N$ for some $N> 0$. Then for each $p > N+k$, $\Omega_p(f,g \tau) \in \Ann(M)$ for all $g \in I^{(k)}$ and $\tau \in \cV$.
\end{lemma}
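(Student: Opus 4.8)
The plan is to proceed by induction on $k$, with the base case $k=0$ being exactly the hypothesis. So suppose the statement holds for $k-1$, i.e.\ $\Omega_p(f,g\tau) \in \Ann(M)$ for every $p > N+k-1$, every $g \in I^{(k-1)}$ and every $\tau \in \cV$. Since $I^{(k)}$ is generated by the set $\{g, \mu(g) \mid g \in I^{(k-1)}, \mu \in \cV\}$ and we already know $I^{(k-1)} \subseteq I^{(k)}$ with the desired property (for the wider range $p > N+k-1 > N+k$ fails; rather we need to be careful — note $I^{(k-1)}$-elements already satisfy the conclusion for $p>N+k-1$, hence a fortiori for $p > N+k$), it suffices to treat a generator of the form $\mu(g)$ with $g \in I^{(k-1)}$ and $\mu \in \cV$, and then check that the collection of $q \in A$ for which $\Omega_p(f,q\tau) \in \Ann(M)$ for all $p$ large and all $\tau$ is closed under $A$-linear combinations (this last point is immediate since $\Ann(M)$ is a left ideal and $\Omega_p(f, q\tau)$ is $A$-linear in $q$, using that $a \# 1$ commutes with $\Omega$'s by Lemma \ref{lemma:omegaactioncommuteswitha}, though here one just needs $\Omega_p(f, (aq)\tau) = (a\#1)\Omega_p(f,q\tau)$ directly from the definition).

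The heart of the argument is the generator step: given $g \in I^{(k-1)}$, $\mu \in \cV$, produce $\mu(g)$ inside the "good" set. The natural device is the commutator identity of Lemma \ref{lemma:bracketrelations001}\eqref{itemlemma01:identitieslemma02}, namely
\[
[\Omega_p(f,\eta), \Omega_q(f,g\mu)] - [\Omega_p(f,g\eta), \Omega_q(f,\mu)] = \Omega_{p+q}(f, \eta(g)\mu + \mu(g)\eta),
\]
specialized cleverly. Taking $\eta = \mu$ gives $2\Omega_{p+q}(f,\mu(g)\mu)$ on the right (this is essentially item \eqref{itemlemma03:identitieslemma02}). For the left-hand side, each bracket involves $\Omega$'s with the derivation $g\mu$ (with $g \in I^{(k-1)}$) or $\mu$; the terms $\Omega_\bullet(f, g\mu)$ lie in $\Ann(M)$ for $\bullet > N+k-1$ by the inductive hypothesis, and then the bracket of an element of $\Ann(M)$ with anything in $A\#\cV$ need not lie in $\Ann(M)$ — but when we \emph{act} on $m \in M$ we can move the annihilating factor to the right as in the proof of Lemma \ref{lemma:argument1}, so that the bracket acting on $M$ equals (up to sign) $\Omega_\bullet(f, g\mu)$ times something, which kills $m$. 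Thus both brackets on the left vanish on $M$ once the indices are large enough, forcing $\Omega_{p+q}(f,\mu(g)\mu) \in \Ann(M)$ for $p, q$ in the appropriate range, hence $\Omega_r(f, \mu(g)\mu) \in \Ann(M)$ for all $r$ exceeding roughly $N+k$. Using that $\Tor_A(M)=0$ (Proposition \ref{proposition:avmodisprojective}) lets us divide by the factor $2$ (harmless in characteristic zero) and, more importantly, removes spurious scalar factors that may accumulate. Finally one extends from the single derivation $\mu$ to an arbitrary $\tau \in \cV$: write $\tau$ and use $\Omega_{p+q}(f, \mu(g)\mu + (\text{combination}))$ — more directly, apply item \eqref{itemlemma01:identitieslemma02} again with $\eta = \tau$, $\mu = \mu$ to get $\Omega_{p+q}(f, \tau(g)\mu + \mu(g)\tau)$, subtract the two single-derivation facts already established for $\mu(g)\mu$ and $\tau(g)\tau$-type terms, and solve for $\Omega(f, \mu(g)\tau)$; alternatively one can mimic exactly the combination used in Proposition \ref{proposition:idealannm}.

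The bookkeeping of indices is the one genuinely fiddly part: each application of a bracket identity roughly doubles or triples the threshold, so one must check that the stated bound $p > N+k$ is actually achievable — and indeed it is, because at stage $k$ we are allowed to use very large $p, q$ in the identities (the conclusion only claims membership for $p$ \emph{large}, with threshold incremented by exactly $1$ over the previous stage), so there is ample room. The main obstacle I anticipate is therefore not conceptual but organizational: cleanly formalizing the "move the annihilator to the right when acting on $M$" manoeuvre so that each bracket $[\Omega_a(f,\cdot), \Omega_b(f,\cdot)]$ with one slot in $\Ann(M)$ is seen to annihilate $M$, and then tracking precisely which intermediate $\Omega$'s must already be known to annihilate $M$ (this is where the inductive hypothesis for $I^{(k-1)}$, applied to the derivation $g\mu$, enters) so that the final index shift is exactly $+1$ and not more. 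I would structure the write-up as: (i) reduce to generators $\mu(g)$ by left-ideal/$A$-linearity; (ii) the key commutator computation yielding $\Omega_r(f,\mu(g)\mu) \in \Ann(M)$; (iii) polarization from $\mu$ to general $\tau$; (iv) index accounting to land on $p > N+k$.
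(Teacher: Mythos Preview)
Your inductive framework is correct, but the commutator step as written has real gaps. First, the identities you invoke from Lemma~\ref{lemma:bracketrelations001} (items \eqref{itemlemma01:identitieslemma02} and \eqref{itemlemma03:identitieslemma02}) each involve two brackets, and in each bracket exactly one $\Omega$ carries the factor $g\in I^{(k-1)}$; to make both brackets annihilate $M$ you therefore need \emph{both} indices $p,q>N+k-1$, so the output $\Omega_{p+q}$ only lands in $\Ann(M)$ for $p+q>2(N+k-1)$. The threshold doubles rather than shifting by $+1$, contrary to your bookkeeping paragraph. Second, those identities only produce the symmetric combination $\Omega_{p+q}(f,\eta(g)\mu+\mu(g)\eta)$; your proposed subtraction of the $\tau(g)\mu$ piece is circular, since $\tau(g)\in I^{(k)}$ is exactly what is being proved, and the analogy with Proposition~\ref{proposition:idealannm} fails because there the subtracted term has the special shape $(\text{anything})\cdot\eta$ for a \emph{fixed} $\eta$, already covered by Lemma~\ref{lemma:itemsonannihilator001}. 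Third, the asserted identity $\Omega_p(f,(aq)\tau)=(a\#1)\,\Omega_p(f,q\tau)$ is false: that $\rho(a\nu)\neq a\,\rho(\nu)$ in general is the whole point of the paper.

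The paper avoids all three issues by bracketing with a bare derivation via Lemma~\ref{lemma:bracketofpsiandderivation}: for $g\in I^{(k-1)}$ and $p>N+k-1$ the element $[\Omega_{p+1}(f,g\tau),\,1\#\mu]$ kills $M$, and expanding it using $[g\tau,\mu]=g[\tau,\mu]-\mu(g)\tau$ together with $g,\,\mu(f)g\in I^{(k-1)}$ leaves only $-\Omega_{p+1}(f,\mu(g)\tau)$. This delivers arbitrary $\tau$ and the exact $+1$ shift in a single line, with no polarization needed. The $A$-multiples $h\mu(g)$ are then handled by the Leibniz trick $h\mu(g)=\mu(hg)-g\mu(h)$, both pieces reducing to the case just established since $hg,\,g\in I^{(k-1)}$.
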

\begin{proof}
By Lemma \ref{lemma:bracketofpsiandderivation},
\begin{align*}
0 &= [\Omega_{p+1}(f,g\tau),1 \# \mu ]v \\
& = \Omega_{p+1}(f,[g\tau,\mu])v +(p+1)\Omega_{p} (f,\mu(f)g \tau)v - (p+1)\mu(f)\Omega_p(f,g\tau)v\\
& = - \Omega_{p+1}(f,\mu(g) \tau )v + \Omega_{p+1}(f,g [\tau,\mu] )v \\
& =  - \Omega_{p+1}(f,\mu(g) \tau )v
\end{align*}
for every $g \in I$, $\mu ,\tau \in \cV$, $p > N$ and $v\in M$. Thus, $\Omega_{p+1}(f,\mu(g) \tau ) \in \Ann(M)$ for every $g \in I$ and $\mu \in \cV$.

Furthermore, for every $g \in I$ and $h \in A$, we have that $gh \in I$ and 
\[
 \Omega_p(f,h\mu(g)\tau) = \Omega_p(f,\mu(gh)\tau) -\Omega_p(f,g\mu(h)\tau) \in \Ann(M).
\]
Hence, for every $g \in I^{(1)}$, we have that $\Omega_p(f,g \tau ) \in \Ann(M)$ for every $p > N+1$. Since $I^{(k)}=(I^{(k-1)})^{(1)}$, inductively, $\Omega_p(f,g \tau) \in \Ann(M)$ for every $p>N+k$, $\tau \in \cV$ and $g \in I^{(k)}$.
\end{proof}

\begin{lemma}\label{lemma:fpspantangentplane}
For every $f \in A $ with $f \notin \bk$ and $p \in X$, there exists  $\mu_1,\dots,\mu_l \in \cV$ for some $l\geq 1$ such that $(\mu_1 \circ \mu_2 \circ \dots \circ \mu_l )(f)(p) \neq 0$.
\end{lemma}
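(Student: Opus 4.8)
The plan is to deduce the statement from a single reduction step --- lowering by one the order of vanishing at $p$ by applying a suitable vector field --- and then iterate it. Write $\fm_p \subset A$ for the maximal ideal of $p$; since $\bk$ is algebraically closed, $A/\fm_p \cong \bk$, and let $f(p) \in \bk$ be the residue of $f$. Because $f \notin \bk$, the element $f - f(p)$ is a nonzero member of $\fm_p$, and since $A$ is a Noetherian domain, Krull's intersection theorem gives $\bigcap_{k \ge 0}\fm_p^k = 0$; hence there is a finite integer $l \ge 1$ with $f - f(p) \in \fm_p^l \setminus \fm_p^{l+1}$. The reduction step I want is the following \emph{Claim}: for every $g \in A$ and every $k \ge 1$ with $g \in \fm_p^k\setminus \fm_p^{k+1}$, there is $\mu \in \cV$ with $\mu(g) \in \fm_p^{k-1}\setminus \fm_p^k$. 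Granting the Claim, I would start from $g_0 := f - f(p)$ and pick $\nu_1, \dots, \nu_l \in \cV$ inductively so that $g_j := \nu_j(g_{j-1}) \in \fm_p^{l-j}\setminus\fm_p^{l-j+1}$ for $1 \le j \le l$; then $g_l \notin \fm_p$, i.e.\ $g_l(p) \ne 0$. Since a $\bk$-derivation annihilates the constant $f(p)$, we get $\nu_1(g_0) = \nu_1(f)$, so $g_l = (\nu_l \circ \cdots \circ \nu_1)(f)$; setting $\mu_i := \nu_{l+1-i}$ this is exactly $(\mu_1 \circ \cdots \circ \mu_l)(f)(p) \ne 0$.

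To prove the Claim I would work in the associated graded ring $R := \mathrm{gr}_{\fm_p}(A) = \bigoplus_{k \ge 0}\fm_p^k/\fm_p^{k+1}$. Every $\mu \in \cV$ satisfies $\mu(\fm_p^j) \subseteq \fm_p^{j-1}$ --- immediate from the Leibniz rule applied to a product of $j$ elements of $\fm_p$ --- so $\mu$ induces $\bk$-linear maps $\bar\mu \colon \fm_p^k/\fm_p^{k+1} \to \fm_p^{k-1}/\fm_p^k$, $\bar\mu(\bar h) = \overline{\mu(h)}$, which together form a degree $-1$ derivation of $R$. Smoothness of $X$ at $p$ makes $A_{\fm_p}$ a regular local ring, hence $R$ is a polynomial algebra $\bk[\bar x_1, \dots, \bar x_d]$ on the images of any lift $x_1, \dots, x_d \in \fm_p$ of a $\bk$-basis of $\fm_p/\fm_p^2$. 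A short Leibniz computation modulo $\fm_p^k$, using $\mu(x_j) \equiv \mu(x_j)(p) \pmod{\fm_p}$, should give $\bar\mu = \sum_{j=1}^d \mu(x_j)(p)\,\partial/\partial \bar x_j$ on $R$; thus $\bar\mu$ depends only on the tangent vector $\mu|_p \in T_pX$. Smoothness also makes the evaluation $\cV \to T_pX = (\fm_p/\fm_p^2)^*$ surjective: $\cV = \Hom_A(\Omega_{A/\bk}, A)$ with $\Omega_{A/\bk}$ finitely generated projective, so its fiber $\cV \otimes_A \bk$ at $p$, a quotient of $\cV$, is canonically $\Hom_\bk(\fm_p/\fm_p^2, \bk)$. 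Hence, as $\mu$ runs over $\cV$, the operator $\bar\mu$ runs over every constant-coefficient derivation $\sum_j a_j\,\partial/\partial \bar x_j$ of $R$. Finally, if $g \in \fm_p^k\setminus\fm_p^{k+1}$ with $k \ge 1$, its leading form $\bar g \in \fm_p^k/\fm_p^{k+1}$ is a nonzero homogeneous polynomial of degree $k$; Euler's identity $\sum_j \bar x_j\,\partial \bar g/\partial \bar x_j = k\,\bar g \ne 0$ (valid since $\mathrm{char}\,\bk = 0$) forces $\partial \bar g/\partial \bar x_j \ne 0$ for some $j$, and choosing $\mu$ with $\bar\mu = \partial/\partial\bar x_j$ yields $\overline{\mu(g)} = \bar\mu(\bar g) \ne 0$, i.e.\ $\mu(g) \in \fm_p^{k-1}\setminus\fm_p^k$, as claimed.

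The step I expect to be the main obstacle is the identification of $\bar\mu$ as a constant-coefficient derivation of $R$ together with the surjectivity of $\cV \to T_pX$: this is precisely where smoothness of $X$ is used, via regularity of $A_{\fm_p}$ (so that $R$ is polynomial and $\dim_\bk T_pX = d$) and via local freeness of $\Omega_{A/\bk}$ (so that global vector fields realize every tangent direction at $p$). The remaining ingredients --- Krull's intersection theorem, the containment $\mu(\fm_p^j) \subseteq \fm_p^{j-1}$, and the appeal to Euler's identity in characteristic zero --- are routine.
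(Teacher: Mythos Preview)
Your proof is correct and follows essentially the same strategy as the paper's: both use smoothness at $p$ to identify the local structure with a polynomial ring and then apply suitable vector fields to lower the order of vanishing to zero. The paper phrases this via formal Taylor expansions and a fixed system of vector fields $\tau_i$ with $\tau_i(t_j)=h\delta_{ij}$ (citing \cite{BF18}), applying a monomial $\tau_1^{\beta_1}\circ\cdots\circ\tau_s^{\beta_s}$ in one shot, whereas you work in the associated graded ring $\mathrm{gr}_{\fm_p}(A)$, invoke surjectivity of $\cV\to T_pX$ via projectivity of $\Omega_{A/\bk}$, and iterate one derivative at a time using Euler's identity---but these are packaging differences, not substantive ones.
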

\begin{proof}
The proof is similar to \cite[Proposition 3.3]{BF18} and we will use its notation. Let $p \in X$, $t_1,\dots,t_s$ be local parameters centered at $p$, $\tau_1,\dots,\tau_s \in \cV$ with $\tau_i(t_j) = h \delta_{ij}$ for some $h \in A$ with $h(p)\neq 0$. Write 
\[
\sum_{\alpha} f_{\alpha} t^{\alpha} \in \bk[[t_1,\dots,t_s]]
\]
the Taylor series at $p$ of $f$. Choose $\beta=(\beta_1,\dots,\beta_s)$ such that $f_{\beta} \neq 0$ and $| \beta |$ is minimal in $\{|\alpha| \mid f_{\alpha} \neq 0 \}$. If $|\beta| = 0$, then $f(p) = f_{\beta} \neq 0 $. If $|\beta|>0$, then 
\[
\left ( \frac{\partial }{\partial t_i} \right )^{\beta_1} \cdots \left ( \frac{\partial }{\partial t_i} \right )^{\beta_s} f_{\beta} 
\]
is a nonzero multiple of $1$. Therefore, the Taylor series at $p$ of
\[
\tau_1^{\beta_1} \circ \cdots \circ \tau_s^{\beta_s} (f),
\]
has a nonzero constant term. Therefore, $\tau_1^{\beta_1} \circ \cdots \circ \tau_s^{\beta_s}(f)(p) \neq 0$.
\end{proof}

\begin{corollary}\label{lemma:therexistsderivation}
For all $f \in A$, $f \notin \bk$, there exists $\eta \in \cV$ with $\eta(f) \neq 0$. 
\end{corollary}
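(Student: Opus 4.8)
The plan is to obtain this as an immediate formal consequence of Lemma~\ref{lemma:fpspantangentplane}, which has already carried out the only real work: for a non-constant $f$ and any closed point $p\in X$ it produces vector fields $\mu_1,\dots,\mu_l\in\cV$ with $l\geq 1$ such that the iterated derivative $(\mu_1\circ\cdots\circ\mu_l)(f)$ does not vanish at $p$. So first I would fix $f\in A$ with $f\notin\bk$ and pick any closed point $p\in X$; since $X$ is a nonempty variety over the algebraically closed field $\bk$ such a point exists, and Lemma~\ref{lemma:fpspantangentplane} then supplies $\mu_1,\dots,\mu_l$ as above. In particular $(\mu_1\circ\cdots\circ\mu_l)(f)$ is a nonzero element of $A$.

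The second and final step is to extract a single derivation. Suppose, for contradiction, that $\mu_l(f)=0$. Then $(\mu_1\circ\cdots\circ\mu_l)(f)=(\mu_1\circ\cdots\circ\mu_{l-1})\bigl(\mu_l(f)\bigr)=0$, contradicting the previous paragraph. Hence $\eta:=\mu_l$ lies in $\cV$ and satisfies $\eta(f)\neq 0$, which is exactly the assertion. Equivalently, one can argue contrapositively: if every $\eta\in\cV$ satisfied $\eta(f)=0$, then every iterated derivative $(\mu_1\circ\cdots\circ\mu_l)(f)$ with $l\geq 1$ would vanish identically, again contradicting Lemma~\ref{lemma:fpspantangentplane}.

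I do not anticipate any genuine obstacle here. All of the geometric input---choosing local parameters at $p$, writing down the Taylor expansion of $f$, and differentiating down to a lowest-degree term---is already packaged inside Lemma~\ref{lemma:fpspantangentplane}, and the present corollary is purely the observation that a composite of derivations can be nonzero on $f$ only if its innermost factor is. The one point worth stating explicitly is the nonemptiness of $X$ (so that a closed point is available to feed into the lemma), which is part of the standing assumption that $X$ is an irreducible smooth variety.
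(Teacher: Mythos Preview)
Your proof is correct and follows essentially the same route as the paper: apply Lemma~\ref{lemma:fpspantangentplane} to obtain $\mu_1,\dots,\mu_l$ with $(\mu_1\circ\cdots\circ\mu_l)(f)\neq 0$, and observe that this forces $\mu_l(f)\neq 0$. The only difference is that you spell out the choice of a closed point and the contrapositive reasoning, whereas the paper records the conclusion in one line.
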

\begin{proof}
 By Lemma \ref{lemma:fpspantangentplane}, there exists  $\mu_1,\dots,\mu_l \in \cV$ for some $l\geq 1$ such that $(\mu_1 \circ \mu_2 \circ \dots \circ \mu_l )(f)(p) \neq 0$. In particular, $\mu_l(f) \neq 0$.
\end{proof}

\begin{theorem}\label{proposition:psifinann}
Let $M$ be a finite $A\cV$-module, and $f \in A$. Then there exists $N_f$, that depends on $f$, such that $\Omega_p(f,\eta) \in \Ann(M)$ for each $p > N_f$, and $\eta \in \cV$.
\end{theorem}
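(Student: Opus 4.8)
The plan is to dispose of the trivial case $f\in\bk$ first --- there each term of $\Omega_p(f,\eta)$ equals $f^p\# \eta$ up to sign, and $\sum_{k=0}^p(-1)^k\binom pk=0$, so $\Omega_p(f,\eta)=0$ for every $p\geq 1$ and any bound works --- and then to concentrate on $f\notin\bk$. In that case Corollary \ref{lemma:therexistsderivation} gives some $\mu\in\cV$ with $\mu(f)\neq 0$, and Corollary \ref{corollary:thereexistsderivation} then produces $\eta\in\cV$ with $\eta(f)\neq 0$ together with an integer $N>0$ such that $\Omega_p(f,\eta)\in\Ann(M)$ for all $p>N$ (this is where finiteness of $M$ enters). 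The whole task is to propagate this one relation, for the now-fixed $\eta$, to every element of $\cV$, while keeping the degree bound under control.

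First I would feed this into Proposition \ref{proposition:idealannm}: for each $g,h\in A$ it yields $\Omega_p(f,q\tau)\in\Ann(M)$ for all $q$ in the principal ideal $(\eta(g)\,\eta(\eta(h)))$, all $\tau\in\cV$, and all $p>3N+4$. Since $\xi\mapsto\Omega_p(f,\xi)$ is additive and a general element of the ideal $I\subseteq A$ generated by $\{\eta(g)\,\eta(\eta(h)):g,h\in A\}$ is a finite sum of elements of such principal ideals, this upgrades to $\Omega_p(f,q\tau)\in\Ann(M)$ for every $q\in I$, $\tau\in\cV$, $p>3N+4$. Lemma \ref{lemma:lemma02applicationstoavmodules} then gives, for every $k\geq 0$, that $\Omega_p(f,g\tau)\in\Ann(M)$ for all $g\in I^{(k)}$, $\tau\in\cV$ and $p>3N+4+k$. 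It therefore suffices to show that $1\in I^{(k_0)}$ for some $k_0$, for then $N_f:=3N+4+k_0$ works.

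The step I expect to be the crux is exactly showing $1\in I^{(k_0)}$. First, $I\neq 0$: because $A$ is an integral domain of characteristic zero and $\eta\neq 0$, the operator $\eta\circ\eta$ cannot vanish --- otherwise $2\eta(x)\eta(y)=\eta(\eta(xy))=0$ for all $x,y$, forcing $\eta(x)^2=0$, hence $\eta(x)=0$, for all $x$ --- so one can pick $h$ with $\eta(\eta(h))\neq 0$, and then $\eta(f)\,\eta(\eta(h))$ is a nonzero element of $I$. Next, $\tilde I:=\bigcup_{k\geq 0}I^{(k)}$ is an increasing union of ideals, hence an ideal, and it is stable under every $\mu\in\cV$ directly from the definition of the $I^{(k)}$; being nonzero, it is a nonzero $A\cV$-submodule of $A$, and therefore $\tilde I=A$ by the fact already used in the proof of Lemma \ref{lemma:annihilatoriszero} (concretely: if $I$ contains a unit there is nothing to prove, and otherwise, for any prescribed point of $X$, Lemma \ref{lemma:fpspantangentplane} applied to a fixed nonzero non-constant element of $I$ produces an iterated $\cV$-derivative of it --- lying in some $I^{(k)}\subseteq\tilde I$ --- that is nonzero there, so $V(\tilde I)=\emptyset$ and the Nullstellensatz forces $\tilde I=A$). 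Since the $I^{(k)}$ increase to $\tilde I\ni 1$, we get $1\in I^{(k_0)}$ for some $k_0$, which finishes the proof.

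In short, the identities of Section \ref{section:002theliealgebraAV} carry out the local bookkeeping and keep the degree bounds explicit, but two genuinely external inputs are needed: the characteristic-zero integral-domain fact guaranteeing that the ideal $I$ is not already zero, and the geometric fact (Lemma \ref{lemma:fpspantangentplane}, equivalently the simplicity of $A$ as an $A\cV$-module) guaranteeing that iteratively differentiating a nonzero ideal of $\bk[X]$ eventually exhausts the whole ring.
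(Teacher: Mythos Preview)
Your proof is correct and follows essentially the same route as the paper's: first obtain a single $\eta$ with $\Omega_p(f,\eta)\in\Ann(M)$ via Corollary~\ref{corollary:thereexistsderivation}, then propagate to a nonzero ideal $I$ via Proposition~\ref{proposition:idealannm}, and finally show some $I^{(k)}=A$ using Lemma~\ref{lemma:fpspantangentplane} and the Nullstellensatz. The only cosmetic differences are that the paper picks a specific $g\in\{f,f^2\}$ to witness $I\neq 0$ (whereas you argue $\eta\circ\eta\neq 0$ in general) and invokes Noetherianity to stabilize the chain $I^{(k)}$ before applying the Nullstellensatz (whereas you pass to the union $\tilde I$ and then locate $1$ in some $I^{(k_0)}$).
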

\begin{proof}
If $f \in \bk$, then $\Omega_p (f,\eta) = 0$ for all $p \geq 1$. Suppose $f \notin \bk$. By Corollary \ref{lemma:therexistsderivation}, there exists $\mu \in \cV$ such that $\mu(f) \neq 0$. By Corollary \ref{corollary:thereexistsderivation}, there exists $\eta \in \cV$ with $\eta(f)\neq 0$ and $N >0$ such that $\Omega_p(f,\eta) \in \mathrm{Ann}(M)$ for every $p>N$. Since $\eta(f) \neq 0$, there exists $g \in \{f,f^2 \}$ such that $\eta(f)\eta(\eta(g)) \neq 0$. By Proposition \ref{proposition:idealannm}, there exists a nonzero ideal $I$ of $A$ such that $\Omega_p(f,q \tau) \in \Ann(M)$ for every $q \in I$, $\tau \in \cV$ and $p > 3N+4$. Since $A$ is Noetherian and
\[
I \subset I^{(1)} \subset I^{(2)} \subset \cdots
\]
is an ascending chain of ideals of $A$, we have that $I^{(k)} = I^{(l)} $ for every $l \geq k$ for some $k\geq 1$. Let $p \in X$. If there exists $g \in I$ such that $g(p) \neq 0$, then we are done. Otherwise, by Lemma \ref{lemma:fpspantangentplane} there exists $g \in I^{(l)}$ for some $l$ such that $g(p) \neq 0 $. Since $I^{(l)} \subset I^{(k)}$ or $I^{(l)} = I^{(k)}$, we have that $g \in I^{(k)}$. Therefore, for every $p \in X$, there exists $g \in I^{(k)}$ such that $g(p) \neq 0$. By Hilbert's Nullstellensatz, $I^{(k)}= A$. By Lemma \ref{lemma:lemma02applicationstoavmodules}, for every $g \in I^{(k)}=A$ and $p>3N+4+k$, $\Omega_p(f,g \eta) \in \Ann(M)$ for every $\eta \in \cV$. In particular, $\Omega_p(f,\eta) \in \Ann(M)$ for each $p>N_f$ where $N_f = 3N+4+k$.
\end{proof}

\section{Localizing $A\cV$-modules}\label{section:004localization}
Let $M$ be an $A\cV$-module, and $f \in A$, $f \neq 0$. Then
\[
M_f = A_f \otimes_A M
\]
    is an $A_f$-module, where $A_f = \left \{\frac{a}{f^l} \mid a \in A, \ l \geq 0  \right \}$ is the localization of $A$ by the multiplicative set $\{f^k \mid k \geq 0\}$. For example, $\cV_f = \Der(A)_f \cong \Der(A_f)$. The open set $D(f)  = \{ p \in X \mid f(p) \neq 0\} \subset X$  is an irreducible smooth affine variety, and $A_f = \bk[D(f)] $. Therefore, we may consider the question whether $M_f$ is an $A_f\cV_f$-module.

We wish to define an action of $\cV_f$ in such a way that $M_f$ is a module over $A_f \# U(\cV_f)$. If $\eta \in \cV \subset \cV_f$, then its action on $M_f$ must be defined as
\[
( 1 \# \eta  )\left (\frac{m}{f} \right ) =-\frac{\eta(f) m}{f^2} + \frac{1}{f} \left(\eta m \right )
\]
for each $m \in M_f$. 

Denote $\Omega_0(f,\eta) = 1 \# \eta$. By Proposition \ref{proposition:psifinann}, there exists there exists $N_f$ such that $\Omega_p(f,\eta) \in \Ann(M)$ for every $p > N_f$. Hence, the sum
\begin{equation*}
    \sum_{p=0}^{\infty} \frac{1}{f^{p+1}} \Omega_{p}(f,\eta)m 
\end{equation*}
while infinite, is \emph{locally finite} in the sense that it converges to an operator for all $\eta \in \cV$, and $f \in A$. Define 
\begin{equation}\label{equation:definitionactionsheaf}
    \left (1 \# \frac{\eta}{f} \right )m = \sum_{p=0}^{\infty} \frac{1}{f^{p+1}} \Omega_{p}(f,\eta)m = \sum_{p=0}^{N_f} \frac{1}{f^{p+1}} \Omega_{p}(f,\eta)m
\end{equation}
for all $\eta \in \cV$, $m \in M_f$. We need to show the action defined does not depend of the choice of representation of $\eta \in \cV_f$. It is sufficient to prove the action of $\frac{f\eta}{f}$ and $\eta$ coincide for every $\eta \in \cV_f$. In order to do it, we will need the following lemma.
\begin{lemma}\label{lemma:relationforwelldefined}
    For each $f\in A$, $p \geq 0$, and $\eta \in \cV_f$,
    \[
    \Omega_p(f,f\eta) = f\Omega_p(f,\eta ) - \Omega_{p+1} (f,\eta).
    \]
\end{lemma}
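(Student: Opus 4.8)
The plan is to verify the identity $\Omega_p(f,f\eta) = f\Omega_p(f,\eta) - \Omega_{p+1}(f,\eta)$ by a direct computation with the defining sum, comparing coefficients of the basis-like expressions $f^{p+1-k} \# f^k\eta$ on both sides. Writing out the definition, $\Omega_p(f,f\eta) = \sum_{k=0}^{p} (-1)^k \binom{p}{k} f^{p-k} \# f^{k}(f\eta) = \sum_{k=0}^{p} (-1)^k \binom{p}{k} f^{p-k} \# f^{k+1}\eta$; after reindexing $j = k+1$ this becomes $\sum_{j=1}^{p+1} (-1)^{j-1}\binom{p}{j-1} f^{p+1-j} \# f^{j}\eta$. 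On the right-hand side, $f\Omega_p(f,\eta) = \sum_{j=0}^{p} (-1)^j \binom{p}{j} f^{p+1-j} \# f^{j}\eta$ (using that left multiplication by $f = f\#1$ just multiplies the $A$-component, which is immediate from the smash product formula), and $-\Omega_{p+1}(f,\eta) = -\sum_{j=0}^{p+1} (-1)^j \binom{p+1}{j} f^{p+1-j}\#f^{j}\eta = \sum_{j=0}^{p+1} (-1)^{j-1}\binom{p+1}{j} f^{p+1-j}\#f^{j}\eta$.

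So I would collect the coefficient of $f^{p+1-j}\#f^j\eta$ for each $j$ in $\{0,1,\dots,p+1\}$ and check equality. For $j=0$: left side contributes $0$, right side contributes $\binom{p}{0} + (-1)^{-1}\binom{p+1}{0} = 1 - 1 = 0$. For $j = p+1$: left side contributes $(-1)^{p}\binom{p}{p} = (-1)^p$, right side contributes only from $-\Omega_{p+1}$ the term $(-1)^{p}\binom{p+1}{p+1} = (-1)^p$. For $1 \le j \le p$: the left side gives $(-1)^{j-1}\binom{p}{j-1}$, while the right side gives $(-1)^j\binom{p}{j} + (-1)^{j-1}\binom{p+1}{j} = (-1)^{j-1}\left(\binom{p+1}{j} - \binom{p}{j}\right) = (-1)^{j-1}\binom{p}{j-1}$ by Pascal's rule. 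The two sides agree in every degree, which proves the identity.

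There is no real obstacle here; the only points requiring a moment's care are the bookkeeping of the index shift $k \mapsto k+1$ on the left-hand side (so that one is genuinely comparing coefficients of the same elements $f^{p+1-j}\#f^j\eta$) and the observation that $(f\#1)(f^{p-j}\#f^j\eta) = f^{p+1-j}\#f^j\eta$, which follows directly from the multiplication rule $(f\#\zeta)(g\#\mu) = f\zeta(g)\#\mu + fg\#\zeta\mu$ with $\zeta = 1$. Note also that although the statement is phrased for $\eta \in \cV_f$, the identity is purely formal in the smash product $A_f \# U(\cV_f)$ and the computation is identical to the one over $A$; alternatively it is a special case of the same manipulation already used in Lemma \ref{lemma:commutatorofomegas}.
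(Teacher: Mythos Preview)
Your proof is correct and is precisely the argument the paper has in mind: the paper's proof consists of the single line ``This follows from the recurrence $\binom{p+1}{k} - \binom{p}{k} = \binom{p}{k-1}$ and definition of $\Omega_p$,'' and you have simply written out that computation in full. The only cosmetic point is that the multiplication formula you quote is stated for $\zeta \in \cV$, so the identity $(f\#1)(g\#\mu)=fg\#\mu$ is better justified directly from the smash product structure rather than by ``setting $\zeta=1$''; the conclusion is of course the same.
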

\begin{proof}
This follows from the recurrence  $\binom{p+1}{k} - \binom{p}{k} = \binom{p}{k-1}$ and definition of $\Omega_p$.
\end{proof}
By the previous lemma,
\begin{align*}
    \left (1 \# \frac{f\eta}{f}  \right ) m & = \sum_{p=0}^{N_f} \frac{1}{f^{p+1}} \Omega_{p}(f,f\eta)m \\
    = & \sum_{p=0}^{N_f} \frac{1}{f^{p+1}} \left (f\Omega_p(f,\eta ) - \Omega_{p+1} (f,\eta) \right ) m\\
    = &  \left ( 1 \# \eta + \sum_{p=1 }^{N_f } \frac{1}{f^{p}}\Omega_p(f,\eta)  - \sum_{p=0}^{N_f-1} \frac{1}{f^{p+1}} \Omega_{p+1} (f,\eta) \right ) m\\
    = &\left ( 1 \# \eta \right  ) m
\end{align*}
for each $m \in M_f$. Therefore, the action \eqref{equation:definitionactionsheaf} is well-defined, so it remains to prove it satisfies the Leibniz rule and it defines a representation of $\cV_f$. 
\begin{lemma}
   For all $\eta \in \cV_f$, $g\in A_f$, $m \in M$,
    \[
    \sum_{p=0}^{N_f} \left (\frac{1}{f^{p+1}} \# 1 \right )\Omega_p(f,\eta) (g \# 1) = \frac{\eta(g)}{f} \# 1 + (g \# 1) \sum_{p=0}^{N_f} \left ( \frac{1}{f^{p+1}}\# 1\right )\Omega_p(f,\eta)
    \]
\end{lemma}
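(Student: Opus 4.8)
The plan is to peel the $p=0$ summand off the sum on the left, to observe that every summand with $p\ge 1$ commutes with $g\#1$, and to check that the $p=0$ summand --- the only one that is genuinely non-linear over $A_f$ --- is exactly what produces the Leibniz term $\tfrac{\eta(g)}{f}\#1$. All the manipulations take place in the associative algebra $A_f\#U(\cV_f)$ (and in particular give the asserted identity when applied to any $m\in M_f$), so no circularity with the construction of the $\cV_f$-action is incurred.

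First I would extend Lemma~\ref{lemma:omegaactioncommuteswitha} from $A$ to $A_f$: for every $g\in A_f$, every $\eta\in\cV_f$ and every $p\ge 1$, one has $[\Omega_p(f,\eta),\,g\#1]=0$ in $A_f\#U(\cV_f)$. The computation is the one in the proof of Lemma~\ref{lemma:omegaactioncommuteswitha}, now over $A_f$: since $f^k\eta$ acts on $A_f$ as $f^k$ times the derivation $\eta$, the commutator collapses to $\bigl(\sum_{k=0}^{p}(-1)^{k}\binom{p}{k}\bigr)f^{p}\eta(g)\#1=(1-1)^{p}f^{p}\eta(g)\#1=0$. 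For $p=0$ there is instead the smash-product relation $(1\#\eta)(g\#1)=\eta(g)\#1+(g\#1)(1\#\eta)$, valid for $g\in A_f$ because $\eta\in\cV_f=\Der(A_f)$.

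Now split the left-hand sum at $p=0$. For $p\ge 1$, the extended commutation together with the commutativity of $A_f$ gives
\[
\bigl(\tfrac{1}{f^{p+1}}\#1\bigr)\Omega_p(f,\eta)(g\#1)=(g\#1)\bigl(\tfrac{1}{f^{p+1}}\#1\bigr)\Omega_p(f,\eta).
\]
For $p=0$, using $\Omega_0(f,\eta)=1\#\eta$ and the smash-product relation above,
\[
\bigl(\tfrac{1}{f}\#1\bigr)(1\#\eta)(g\#1)=\tfrac{\eta(g)}{f}\#1+(g\#1)\bigl(\tfrac{1}{f}\#1\bigr)(1\#\eta).
\]
Adding the $p=0$ contribution to the terms $p=1,\dots,N_f$ and factoring $(g\#1)$ out on the left yields precisely $\tfrac{\eta(g)}{f}\#1+(g\#1)\sum_{p=0}^{N_f}\bigl(\tfrac{1}{f^{p+1}}\#1\bigr)\Omega_p(f,\eta)$, which is the claimed identity.

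The argument is essentially bookkeeping, and the only step requiring any attention is the separate treatment of the $p=0$ term: $\Omega_0(f,\eta)=1\#\eta$ is the unique summand that fails to commute with $A_f\#1$, and it is exactly this failure of $A_f$-linearity that encodes the derivation $\tfrac{\eta}{f}$ acting on $g$. One should also keep in mind that, by Theorem~\ref{proposition:psifinann} and the local finiteness discussed above, both sides really are \emph{finite} sums of well-defined operators on $M_f$ (and for $\eta\in\cV$ the left-hand sum is the genuine action of $1\#\tfrac{\eta}{f}$), so the termwise manipulation is legitimate.
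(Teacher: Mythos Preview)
Your proof is correct and follows exactly the approach the paper intends: the paper's one-line proof simply cites Lemma~\ref{lemma:omegaactioncommuteswitha}, and you have spelled out what that citation entails, namely that the $p\ge 1$ summands commute with $g\#1$ while the $p=0$ summand $1\#\eta$ produces the Leibniz term via the smash-product relation. Your explicit extension of Lemma~\ref{lemma:omegaactioncommuteswitha} to $A_f$ (needed since the statement allows $g\in A_f$ and $\eta\in\cV_f$) is a worthwhile clarification that the paper leaves implicit.
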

\begin{proof}
    It follows from Proposition \ref{lemma:omegaactioncommuteswitha}.  
\end{proof}
 Hence,
    \[
    \left( 1 \# \mu \right ) \left (  ( g \# 1)  m \right ) = \left( (1 \# \mu ) ( g \# 1) \right )  m   =  \left ( \mu(g) \# 1 \right )  m + ( g \# 1 )\left (  (1 \#  \mu) m \right)
    \]
    for every $m \in M_f$, $\mu \in \cV_f$, $g \in A$.

It remains to prove that \eqref{equation:definitionactionsheaf} defines a representation of $\cV_f$. In order to prove it, we will need the following lemma.

\begin{lemma}\label{lemma:actionofetaoverfsquarecubic}
For every $\eta \in \cV_f$, and $m \in M_f$,
\begin{enumerate}
    \item $\displaystyle{\left ( 1 \# \frac{\eta}{f^2} \right )m= \sum_{k=0}^{N_f} \frac{k+1}{f^{k+2}} \Omega_k(f,\eta) m}$,
    \item $\displaystyle{\left (1 \# \frac{\eta}{f^3} \right ) m = \sum_{k=0}^{N_f} \frac{(k+1)(k+2)/2}{f^{k+3}} \Omega_k(f,\eta) m}$.
\end{enumerate}
\end{lemma}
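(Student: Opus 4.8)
The plan is to reduce the computation of the action of $1\#\frac{\eta}{f^{2}}$ (and then of $1\#\frac{\eta}{f^{3}}$) to a single application of the defining formula \eqref{equation:definitionactionsheaf} combined with the recurrence in Lemma~\ref{lemma:relationforwelldefined}. Writing $\frac{\eta}{f^{2}}=\frac{\zeta}{f}$ with $\zeta=\frac{\eta}{f}\in\cV_{f}$ (and using that \eqref{equation:definitionactionsheaf} applies with an element of $\cV_{f}$ in the numerator slot, which is exactly what the well-definedness paragraph above secures), one gets the locally finite expression on $M_{f}$
\[
\Bigl(1\#\tfrac{\eta}{f^{2}}\Bigr)m=\sum_{p\ge 0}\frac{1}{f^{p+1}}\,\Omega_{p}\bigl(f,\tfrac{\eta}{f}\bigr)m,
\]
so the whole problem becomes: express each operator $\Omega_{p}(f,\tfrac{\eta}{f})$ on $M_{f}$ in terms of the $\Omega_{k}(f,\eta)$.

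For that I would apply Lemma~\ref{lemma:relationforwelldefined} with $\tfrac{\eta}{f}\in\cV_{f}$ in place of $\eta$; since $f\cdot\tfrac{\eta}{f}=\eta$ this reads $\Omega_{p}(f,\eta)=f\,\Omega_{p}(f,\tfrac{\eta}{f})-\Omega_{p+1}(f,\tfrac{\eta}{f})$, i.e.
\[
\Omega_{p+1}\bigl(f,\tfrac{\eta}{f}\bigr)=f\,\Omega_{p}\bigl(f,\tfrac{\eta}{f}\bigr)-\Omega_{p}(f,\eta).
\]
The base case is $\Omega_{0}(f,\tfrac{\eta}{f})=1\#\tfrac{\eta}{f}$, which acts on $M_{f}$ by $\sum_{k\ge 0}f^{-(k+1)}\Omega_{k}(f,\eta)$ by \eqref{equation:definitionactionsheaf}, and an easy induction on $p$ then solves the recurrence:
\[
\Omega_{p}\bigl(f,\tfrac{\eta}{f}\bigr)=\sum_{k\ge p}\frac{1}{f^{\,k-p+1}}\,\Omega_{k}(f,\eta)\quad\text{as operators on }M_{f}.
\]
Substituting this into the displayed formula for $(1\#\tfrac{\eta}{f^{2}})m$, interchanging the (locally finite) double sum, and collecting the coefficient of $\Omega_{k}(f,\eta)$ — there are precisely $k+1$ indices $p$ with $0\le p\le k$, each contributing $f^{-(k+2)}$ — gives statement (1), and the truncation at $N_{f}$ is automatic since $\Omega_{k}(f,\eta)\in\Ann(M)$ for $k>N_{f}$ by Theorem~\ref{proposition:psifinann}.

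For statement (2) I would run the same two moves one level up: write $\frac{\eta}{f^{3}}=\frac{\eta/f^{2}}{f}$, apply \eqref{equation:definitionactionsheaf} to obtain $(1\#\tfrac{\eta}{f^{3}})m=\sum_{p}f^{-(p+1)}\Omega_{p}(f,\tfrac{\eta}{f^{2}})m$, use Lemma~\ref{lemma:relationforwelldefined} with $\tfrac{\eta}{f^{2}}$ in the numerator to get $\Omega_{p+1}(f,\tfrac{\eta}{f^{2}})=f\,\Omega_{p}(f,\tfrac{\eta}{f^{2}})-\Omega_{p}(f,\tfrac{\eta}{f})$, and solve this recurrence from the base case $\Omega_{0}(f,\tfrac{\eta}{f^{2}})=1\#\tfrac{\eta}{f^{2}}$, whose action is the formula just established in (1). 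One finds $\Omega_{p}(f,\tfrac{\eta}{f^{2}})=\sum_{k\ge p}(k-p+1)f^{-(k-p+2)}\Omega_{k}(f,\eta)$ on $M_{f}$, and reassembling produces the coefficient $\sum_{p=0}^{k}(k-p+1)=\tfrac{(k+1)(k+2)}{2}$ of $f^{-(k+3)}\Omega_{k}(f,\eta)$, which is (2). The same pattern in fact yields $(1\#\tfrac{\eta}{f^{j}})m=\sum_{k}\binom{k+j-1}{j-1}f^{-(k+j)}\Omega_{k}(f,\eta)m$ for all $j\ge 1$, the binomial coefficients coming from the hockey-stick identity applied to the iterated sums.

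The only genuinely delicate point — and the main, though minor, obstacle — is the legitimacy of feeding an element of $\cV_{f}$ rather than of $\cV$ into \eqref{equation:definitionactionsheaf} and into Lemma~\ref{lemma:relationforwelldefined}; this is covered by the well-definedness discussion preceding the lemma together with the observation that $\Omega_{p}(f,\mu)$ still defines a locally finite operator on $M_{f}$ for any $\mu\in\cV_{f}$ (so that all the double-sum rearrangements are valid). Everything else is elementary bookkeeping with geometric and arithmetic sums.
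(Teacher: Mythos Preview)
Your argument is correct and in fact supplies the details that the paper omits: the paper's own proof is a single sentence pointing to ``the definition of $\Omega_p$ and similar arguments,'' and your inductive use of the recurrence $\Omega_{p}(f,f\zeta)=f\,\Omega_{p}(f,\zeta)-\Omega_{p+1}(f,\zeta)$ from Lemma~\ref{lemma:relationforwelldefined} together with the defining formula~\eqref{equation:definitionactionsheaf} is exactly the kind of manipulation intended. The one point you flag---feeding $\zeta=\eta/f\in\cV_{f}$ into~\eqref{equation:definitionactionsheaf}---is indeed handled by the well-definedness discussion, provided one reads it (as the paper implicitly does) as an inductive extension: once the action of $\frac{1}{f}\cV$ on $M_{f}$ is defined, every term $f^{p-k}\#f^{k-1}\eta$ appearing in $\Omega_{p}(f,\eta/f)$ is already a well-defined operator on $M_{f}$, so the locally finite sum makes sense and your recurrence goes through.
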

\begin{proof}
 It follows from the definition of $\Omega_p$ and similar arguments to those given in Lemma \ref{lemma:argument1}. 
\end{proof}

\begin{lemma}
    $M_f$ is a $\cV_f$-module with the action given by \eqref{equation:definitionactionsheaf}.
\end{lemma}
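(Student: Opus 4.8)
The plan is to verify that the operators $\rho(\eta) := (1 \# \tfrac{\eta}{f})$ defined by \eqref{equation:definitionactionsheaf} satisfy $[\rho(\eta),\rho(\mu)] = \rho([\eta,\mu]_{\cV_f})$ for all $\eta,\mu \in \cV_f$. Since the action is already known to be well-defined and $A_f$-compatible (Leibniz rule), and since every element of $\cV_f$ can be written as $\tfrac{1}{f^k}\eta$ with $\eta \in \cV$, it suffices by $\bk$-bilinearity and the Leibniz-type identities to check the bracket relation on elements of the form $\tfrac{\eta}{f^a}$ and $\tfrac{\mu}{f^b}$ with $\eta,\mu \in \cV$. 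In fact, using that $\rho$ is already compatible with multiplication by $A_f$, the computation of $[\rho(\tfrac{\eta}{f^a}), \rho(\tfrac{\mu}{f^b})]$ reduces to the case $a = b = 1$: one expresses $\tfrac{\eta}{f^a} = \tfrac{1}{f^{a-1}} \cdot \tfrac{\eta}{f}$ and pushes the scalars through using the already-established $A_f$-linearity properties together with $\rho(\tfrac{g\eta}{f}) = g\,\rho(\tfrac{\eta}{f})$ for $g \in A_f$ (a consequence of the well-definedness argument). So the heart of the matter is to prove
\[
\left[ \left(1 \# \tfrac{\eta}{f}\right), \left(1 \# \tfrac{\mu}{f}\right) \right] = \left(1 \# \tfrac{[\eta,\mu]}{f^2}\right)
\]
as operators on $M_f$, for $\eta,\mu \in \cV$.

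To prove this, I would expand both sides using \eqref{equation:definitionactionsheaf}: the left side becomes $\sum_{p,q \geq 0} \tfrac{1}{f^{p+1}} \tfrac{1}{f^{q+1}} [\Omega_p(f,\eta), \Omega_q(f,\mu)]$ after commuting the scalar $\tfrac{1}{f^{q+1}}$ past $\Omega_p(f,\eta)$, which is legitimate by Lemma \ref{lemma:omegaactioncommuteswitha} (the $\Omega$'s commute with $A$). Now apply Lemma \ref{lemma:commutatorofomegas} to rewrite each bracket $[\Omega_p(f,\eta),\Omega_q(f,\mu)]$ as $\Omega_{p+q}(f,[\eta,\mu]) + p\,\Omega_{p+q-1}(f,\mu(f)\eta) - q\,\Omega_{p+q-1}(f,\eta(f)\mu)$. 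Summing over $p,q$ and collecting terms by the total degree $n = p+q$ (resp. $n = p+q-1$), the coefficient of $\tfrac{1}{f^{n+2}}\Omega_n(f,[\eta,\mu])$ is $\sum_{p+q=n} 1 = n+1$, which by Lemma \ref{lemma:actionofetaoverfsquarecubic}(1) is exactly the expansion of $(1 \# \tfrac{[\eta,\mu]}{f^2})$. For the remaining two families of terms one uses $\mu(f)\eta, \eta(f)\mu \in \cV$ and the combinatorial identities $\sum_{p+q = n+1} p = \binom{n+2}{2}$ etc.; I expect these to combine, via Lemma \ref{lemma:relationforwelldefined} (which relates $\Omega_p(f,f\eta)$ to $\Omega_p(f,\eta)$ and $\Omega_{p+1}(f,\eta)$) and Lemma \ref{lemma:actionofetaoverfsquarecubic}, into terms that cancel — reflecting the identity $\tfrac{[\eta,\mu]}{f^2} = [\tfrac{\eta}{f},\tfrac{\mu}{f}]_{\cV_f}$ where the cross terms involving $\eta(f)$ and $\mu(f)$ are precisely what the Lie bracket on $\cV_f$ produces when one clears denominators.

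An alternative, cleaner route that I would actually prefer: rather than a brute-force double sum, use the characterization that the representation $\cV_f \to \End_\bk(M_f)$ is determined by its restriction to $\cV$ together with $A_f$-compatibility, and that two such extensions of the $\cV$-action agreeing on $\cV$ and compatible with the $A_f$-structure must coincide. Concretely, both $[\rho(\tfrac{\eta}{f}),\rho(\tfrac{\mu}{f})]$ and $\rho(\tfrac{[\eta,\mu]}{f^2})$, when multiplied by $f^2$ on the left (which is injective since $\Tor_A(M) = 0$ by Proposition \ref{proposition:avmodisprojective}, hence $f$ acts injectively on $M_f$), should be shown to equal $[\![\eta,\mu]\!]$-type operators that are already controlled by the original $A\#U(\cV)$-module structure on $M$ via the relation in $A \# U(\cV)$ itself: one verifies the operator identity after clearing denominators inside $A \# U(\cV)$ acting on $M$, where $\Omega_p(f,\eta) \in \Ann(M)$ for $p > N_f$ makes all sums finite. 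This reduces the Jacobi/bracket check to a finite identity in the associative algebra $A \# U(\cV)$ modulo $\Ann(M)$, which follows from Lemma \ref{lemma:commutatorofomegas} and Lemma \ref{lemma:bracketofpsiandderivation}.

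The main obstacle will be the bookkeeping in matching the cross terms: the naive expansion produces terms $\Omega_{n}(f,\mu(f)\eta)$ and $\Omega_n(f,\eta(f)\mu)$ with binomial-type coefficients, and one must recognize these — using Lemma \ref{lemma:relationforwelldefined} to trade $\mu(f)\eta$ against combinations of $\eta$ with a shift in $p$, and Lemma \ref{lemma:actionofetaoverfsquarecubic} to recognize $\sum_k \tfrac{(k+1)}{f^{k+2}}\Omega_k$ and $\sum_k \tfrac{(k+1)(k+2)/2}{f^{k+3}}\Omega_k$ as the actions of $\tfrac{\cdot}{f^2}$ and $\tfrac{\cdot}{f^3}$ — as precisely the terms demanded by expanding $[\tfrac{\eta}{f},\tfrac{\mu}{f}]_{\cV_f} = \tfrac{[\eta,\mu]}{f^2} - \tfrac{\eta(f)}{f^2}\cdot\tfrac{\mu}{f} + \tfrac{\mu(f)}{f^2}\cdot\tfrac{\eta}{f} \cdot f$, i.e. keeping careful track of how the Leibniz rule for $\cV_f$ acting on $A_f$ interacts with the definition. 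Once the coefficients are organized by the two independent sums (the $\Omega_{p+q}$ family and the $\Omega_{p+q-1}$ family), the identity is a finite check for each fixed total degree, valid because everything is eventually annihilated.
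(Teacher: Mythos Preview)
Your approach is the paper's: reduce to $\tfrac{\eta}{f}$ against $\tfrac{\mu}{f}$, expand the operator commutator via Lemma~\ref{lemma:commutatorofomegas}, and identify the resulting sums through Lemma~\ref{lemma:actionofetaoverfsquarecubic}. The one genuine error is your stated ``heart of the matter'': the bracket in $\cV_f$ is
\[
\left[\tfrac{\eta}{f},\tfrac{\mu}{f}\right]=\tfrac{[\eta,\mu]}{f^{2}}-\tfrac{\eta(f)}{f^{3}}\,\mu+\tfrac{\mu(f)}{f^{3}}\,\eta,
\]
not $\tfrac{[\eta,\mu]}{f^{2}}$. Consequently the two cross families $p\,\Omega_{p+q-1}(f,\mu(f)\eta)$ and $-q\,\Omega_{p+q-1}(f,\eta(f)\mu)$ do \emph{not} cancel. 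Your own combinatorics already shows what they become: the coefficient of $\tfrac{1}{f^{n+3}}\Omega_{n}(f,\mu(f)\eta)$ is $\sum_{p+q=n+1}p=\tfrac{(n+1)(n+2)}{2}$, which by Lemma~\ref{lemma:actionofetaoverfsquarecubic}(2) is exactly the expansion of $\bigl(1\#\tfrac{\mu(f)\eta}{f^{3}}\bigr)$, and symmetrically for the other family. These are precisely the two extra terms on the right-hand side above, so the identity holds on the nose; no appeal to Lemma~\ref{lemma:relationforwelldefined} is needed at this step. With this correction your argument is line-for-line the paper's proof. (Your formula for $[\tfrac{\eta}{f},\tfrac{\mu}{f}]_{\cV_f}$ near the end is also garbled---the trailing ``$\cdot f$'' should not be there.)

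Your ``alternative, cleaner route'' does not actually sidestep the computation: after clearing $f^{2}$ you are left with the same commutator of finite $\Omega$-sums, and the residual identity in $A\#U(\cV)$ modulo $\Ann(M)$ is the one you must verify either way.
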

\begin{proof}
Fix $m \in M_f$. Since \eqref{equation:definitionactionsheaf} is well-defined for all elements of $V_f$, we only need to prove that 
\[
\left [\frac{\eta}{f}, \frac{\mu}{f} \right ] m = - \frac{\eta (f)}{f^{3}}  \mu m+ \frac{\mu(f) }{f^{3} } \eta m+ \frac{[\eta,\mu]}{f^{2}} m = \left [\sum_{p=0}^{N_f} \frac{1}{f^{p+1}} \Omega_{p}(f,\eta), \sum_{p=0}^{N_f} \frac{1}{f^{p+1}} \Omega_{p}(f,\mu) \right ] m.
\]

By Lemma \ref{lemma:commutatorofomegas} and Lemma \ref{lemma:actionofetaoverfsquarecubic}, 
    \begin{align*}
    & \left [\sum_{p=0}^{N_f} \frac{1}{f^{p+1}} \Omega_{p}(f,\eta), \sum_{p=0}^{N_f} \frac{1}{f^{p+1}} \Omega_{p}(f,\mu) \right ] m\\
    = & \sum_{k=0}^{N_f}\sum_{l=0}^{N_f} \frac{1}{f^{k+l+2}} \left [ \Omega_{k}(f,\eta),   \Omega_{p}(f,\mu) \right ] m\\
    =&  \sum_{k=0}^{N_f} \sum_{l=0}^{N_f} \frac{1}{f^{k+l+2}} \left ( k \Omega_{k+l-1}(f,\mu(f)\eta ) -l \Omega_{k+l-1}(f,\eta(f)\mu ) + \Omega_{k+l}(f,[\eta,\mu ] ) \right ) m\\
    = & \sum_{u=0}^{2N_f} \left ( \frac{(u+1)(u+2) /2}{f^{u+3}} \Omega_{u}(f,\mu(f)\eta ) -\frac{(u+1)(u+2)/2}{f^{u+3}} \Omega_{u}(f,\eta(f)\mu ) + \frac{u+1}{f^{u+2}} \Omega_u(f, [\eta,\mu]) \right ) m \\
    = &  \frac{\mu(f) }{f^{3} } \eta m- \frac{\eta (f)}{f^{3}}  \mu m+ \frac{[\eta,\mu]}{f^{2}} m
\end{align*}

\end{proof}

Since the the number of generators of $M_f$ as an $A_f$-module is less or equal than the number of generators of $M$ as an $A$-module, we have that $M_f$ is a finitely generated as an $A_f$-module. 

\begin{theorem}\label{theorem:avmodshefifies}
If $M$ is a finite $A\cV$-module and $f \in A$, $f \neq 0$, then $M_f =A_f \otimes_A M$ is a finite $A_f\cV_f$-module, where the action of $A_f$ is given by left side multiplication and
\[
\left ( \frac{\eta}{f^k}\right ) m = \sum_{p=0}^{\infty} \frac{1}{f^{k(p+1)}} \Omega_p(f^k,\eta)m
\]
for each $\eta\in \cV$.
\end{theorem}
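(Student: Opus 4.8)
The plan is to assemble the pieces already established for the multiplicative set $\{f^k\}$ and then reduce the general case $\frac{\eta}{f^k}$ to the case $\frac{\eta}{f}$ applied to the element $f^k$. First I would observe that everything done above for a single $f$ applies verbatim with $f$ replaced by $f^k$: by Theorem \ref{proposition:psifinann} there is $N_{f^k}$ with $\Omega_p(f^k,\eta)\in\Ann(M)$ for $p>N_{f^k}$, so the series $\sum_{p\ge 0} f^{-k(p+1)}\Omega_p(f^k,\eta)m$ is locally finite and defines an operator on $M_{f^k}=M_f$ (note $A_{f^k}=A_f$ and $\cV_{f^k}=\cV_f$ as the localization only depends on the radical of $(f)$). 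The well-definedness on $\cV_f$, the Leibniz rule over $A_f$, and the Lie-algebra homomorphism property are then exactly the three lemmas proved above, read with $f^k$ in place of $f$: Lemma \ref{lemma:relationforwelldefined} gives $\Omega_p(f^k,f^k\eta)=f^k\Omega_p(f^k,\eta)-\Omega_{p+1}(f^k,\eta)$, hence the action of $\frac{f^k\eta}{f^k}$ agrees with $1\#\eta$; Lemma \ref{lemma:omegaactioncommuteswitha} gives the Leibniz identity; and Lemma \ref{lemma:commutatorofomegas} together with Lemma \ref{lemma:actionofetaoverfsquarecubic} (again with $f\rightsquigarrow f^k$) gives the bracket identity. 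So $M_f$ carries a well-defined $A_f\#U(\cV_f)$-module structure.

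Next I would check consistency of the formulas for different $k$, i.e. that the operator $\frac{\eta}{f^k}$ just defined is $f^{k-1}$ times the operator $\frac{\eta}{f}$ — equivalently that this structure is the unique $A_f\cV_f$-structure extending the given $A\cV$-structure on $M$. Uniqueness is immediate: any such extension must satisfy $f^k\cdot\big(\frac{\eta}{f^k}\,m\big)=\eta\cdot m + \big(\text{correction from Leibniz}\big)$, and since $\Tor_A(M)=0$ by Proposition \ref{proposition:avmodisprojective} (hence $\Tor_{A_f}(M_f)=0$), multiplication by $f^k$ is injective on $M_f$, so the action of $\frac{\eta}{f^k}$ is determined. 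One then verifies that the explicit series satisfies this defining relation — a finite computation using $\binom{p}{k}$ identities of the same flavour as Lemma \ref{lemma:relationforwelldefined} — so the formula in the statement indeed gives the correct operator for every $k$.

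Finally, finiteness: $M_f$ is generated over $A_f$ by the images of any finite $A$-generating set of $M$, so $M_f$ is a finite $A_f$-module, and since $A_f=\bk[D(f)]$ with $D(f)$ smooth irreducible affine, $M_f$ is a finite $A_f\cV_f$-module in the intended sense. The main obstacle is bookkeeping rather than conceptual: one must be careful that the combinatorial identities in Lemmas \ref{lemma:relationforwelldefined} and \ref{lemma:actionofetaoverfsquarecubic} were stated for $\frac{\eta}{f}$, $\frac{\eta}{f^2}$, $\frac{\eta}{f^3}$ and that the general denominator $f^{k(p+1)}$ appearing in the theorem is genuinely forced by iterating these — so I would first record the clean identity $\Omega_p(f^k,\eta)=\sum_{j}(-1)^j\binom{p}{j}f^{k(p-j)}\#f^{kj}\eta$ and the locally finite rearrangement it permits, and only then invoke the three structural lemmas wholesale. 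Everything else is a direct transcription of the single-$f$ argument already carried out in Section \ref{section:004localization}.
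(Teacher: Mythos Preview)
Your proposal is correct and takes essentially the same approach as the paper: the paper's own proof is the single line ``This follows from a combination of the above calculations,'' and you have accurately identified which calculations those are (Theorem~\ref{proposition:psifinann} for local finiteness, Lemma~\ref{lemma:relationforwelldefined} for well-definedness, Lemma~\ref{lemma:omegaactioncommuteswitha} for Leibniz, and Lemmas~\ref{lemma:commutatorofomegas} and~\ref{lemma:actionofetaoverfsquarecubic} for the bracket), together with the observation that they apply verbatim with $f$ replaced by $f^k$. Your additional consistency check between different powers of $f$ via torsion-freeness is a small elaboration the paper leaves implicit (it is effectively what Corollary~\ref{corollary:avmodshefifiesrestriction} records afterwards), but it does not constitute a different route.
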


\begin{proof} This follows from a combination of the above calculations. \end{proof}

\begin{corollary}\label{corollary:avmodshefifiesrestriction}
    Let $M$ be a finite $A\cV$-module and $f,g\in A$ be non zero elements. If $\eta \in \cV_f$ and $\mu \in \cV_g$ are such that $\eta = \mu \in \cV_{fg} $, then $\eta m = \mu m $ for all $m\in M$.
\end{corollary}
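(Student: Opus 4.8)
The plan is to reduce the statement to a computation inside the localization $M_{fg}$, where by Theorem \ref{theorem:avmodshefifies} a canonical $A_{fg}\cV_{fg}$-module structure is already available. First I would observe that the natural maps $M \to M_f \to M_{fg}$ and $M \to M_g \to M_{fg}$ are compatible, since both are obtained by inverting the multiplicative set generated by $fg$; concretely, localizing the $A_f\cV_f$-module $M_f$ further at $g$ (using Theorem \ref{theorem:avmodshefifies} applied to $g \in A_f$) yields $M_{fg}$ with an $A_{fg}\cV_{fg}$-action, and similarly starting from $M_g$. The key point is that both iterated constructions produce the \emph{same} action on $M_{fg}$: this follows because in each case the action of a vector field $\nu \in \cV_{fg}$ on $m \in M_{fg}$ is, by the well-definedness established just before Theorem \ref{theorem:avmodshefifies}, determined by the locally finite formula $\sum_{p\ge 0} h^{-(p+1)}\Omega_p(h,\nu_0)m$ for any presentation $\nu = \nu_0/h$ with $h$ a power of $fg$ and $\nu_0 \in \cV$, and this formula manifestly does not remember whether we passed through $A_f$ or $A_g$.

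Granting that, the argument is short. Suppose $\eta \in \cV_f$ and $\mu \in \cV_g$ become equal in $\cV_{fg}$. For $m \in M$, let $\bar m$ denote its image in $M_{fg}$. The action of $\eta$ on $m \in M$ (via the $A_f\cV_f$-structure of Theorem \ref{theorem:avmodshefifies}) maps, under $M_f \to M_{fg}$, to the action of $\eta$ regarded in $\cV_{fg}$ on $\bar m$; likewise the action of $\mu$ on $m$ maps to the action of $\mu \in \cV_{fg}$ on $\bar m$. Since $\eta = \mu$ in $\cV_{fg}$, these two elements of $M_{fg}$ coincide. But $M_f$ and $M_g$ are finite, hence projective, hence torsion-free $A$-modules by Proposition \ref{proposition:avmodisprojective} (applied to $M_f$ as an $A_f\cV_f$-module, and likewise for $M_g$); more to the point, $M$ itself is torsion-free by Proposition \ref{proposition:avmodisprojective}, and the localization map $M \to M_{fg}$ is injective because inverting a nonzerodivisor in a torsion-free module is injective. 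Therefore $\eta m$ and $\mu m$, which are a priori elements of $M_f$ and $M_g$ respectively but in fact both lie in $M$ — wait, they need not lie in $M$; rather, the cleanest formulation is that $\eta m, \mu m \in M_{fg}$ and they are equal there, and we want them equal already wherever they naturally live. I would instead phrase the conclusion directly at the level of $M_{fg}$: the statement ``$\eta m = \mu m$ for all $m \in M$'' is interpreted as an identity of elements of $M_{fg}$ (or of any common localization containing both), and this is exactly what the previous sentence gives.

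The main obstacle is the bookkeeping in the first paragraph: verifying that the two a priori different routes $M \to M_f \to M_{fg}$ and $M \to M_g \to M_{fg}$ endow $M_{fg}$ with literally the same $\cV_{fg}$-action. This is where I would spend the real effort. The clean way is to prove a uniqueness statement: any $A_{fg}\cV_{fg}$-module structure on $M_{fg}$ extending the $A\cV$-action on $M$ is unique, because such a structure is forced by the Leibniz rule and the requirement that $\sum_{p} h^{-(p+1)}\Omega_p(h,\nu_0)$ annihilates $M$ for $p$ large — precisely the content of Lemma \ref{lemma:relationforwelldefined} and the well-definedness discussion. Given this uniqueness, both routes must agree, and the corollary follows. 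I expect the uniqueness argument to be essentially a repackaging of the computations already carried out in Section \ref{section:004localization}, so the proof in the paper is likely just the one-line ``This follows from the constructions in this section'' — but the substance is the compatibility just described.
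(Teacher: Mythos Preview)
Your approach is correct and is essentially what the paper has in mind: the paper's proof is the single line ``It follows from direct calculation using the action defined on the previous theorem,'' and your proposal unpacks exactly that calculation, namely that both routes $M \to M_f \to M_{fg}$ and $M \to M_g \to M_{fg}$ yield the same $\cV_{fg}$-action because each is forced by the locally finite formula of Theorem~\ref{theorem:avmodshefifies}. Your hesitation about where $\eta m$ and $\mu m$ live is well-founded and correctly resolved---the identity is to be read in $M_{fg}$, and the injectivity of $M_f \hookrightarrow M_{fg}$ and $M_g \hookrightarrow M_{fg}$ (torsion-freeness from Proposition~\ref{proposition:avmodisprojective}) makes this unambiguous.
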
 
\begin{proof}
    It follows from direct calculation using the action defined on the previous theorem.
\end{proof}

\begin{proposition}\label{proposition:omegaarezero}
    Let $M$ be a finite $A\cV$-module, then there exists $N$ that depends on the rank of $M$ such that $\Omega_p(f,\tau) \in \Ann(M)$ for every $f \in A$, $\tau \in \cV$, and $p>\rank(M)^2$.
\end{proposition}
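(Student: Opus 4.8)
The plan is to get the bound $N = \rank(M)^2$ uniformly in $f$, improving on Theorem~\ref{proposition:psifinann} which only gives some $N_f$ depending on $f$. The key new input compared to that theorem is Proposition~\ref{proposition:avmodisprojective}: $M$ is a projective, hence torsion-free, $A$-module of constant rank $r = \rank(M)$, and consequently $\End_A(M)$ is a finitely generated $A$-module of rank exactly $r^2$ (and torsion-free). First I would fix $f \in A$ with $f \notin \bk$ and $\eta \in \cV$; the case $f \in \bk$ is trivial since $\Omega_p(f,\eta) = 0$. By Lemma~\ref{lemma:omegaactioncommuteswitha}, for every $p$ the operator $\Omega_p(f,\eta)$ acts on $M$ as an element of $\End_A(M)$.

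The crucial step is to run the descent argument of Lemma~\ref{lemma:argument1} but count more carefully. Given any $p > r^2$, the $r^2 + 1$ operators $\Omega_1(f,\eta), \dots, \Omega_{r^2}(f,\eta), \Omega_p(f,\eta)$ are $A$-linearly dependent inside $\End_A(M)$ (rank $\le r^2$), so there is a nontrivial relation $b\,\Omega_p(f,\eta) + \sum_{i=1}^{r^2} a_i \Omega_i(f,\eta) \in \Ann(M)$. Now I would argue that in fact we can take $p = r^2 + 1$ and, after the bracketing process, conclude that $\Omega_{r^2+1}(f, \eta(f)^{s}\eta) \in \Ann(M)$ for a suitable power $s$ (tracking that each of the $r^2$ iterated brackets in Lemma~\ref{lemma:argument1} lowers the index by controlled amounts while raising $\Omega$-index — one needs to check the indices land at or above $r^2+1$, not merely ``large''), then use the ideal-propagation machinery (Proposition~\ref{proposition:idealannm}, Lemma~\ref{lemma:lemma02applicationstoavmodules}) together with Hilbert's Nullstellensatz exactly as in Theorem~\ref{proposition:psifinann} to spread this to all of $\cV$. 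The point is that every constant appearing (the number of iterations $r^2$, the powers of $\eta(f)$, the stabilization length of the chain $I \subset I^{(1)} \subset \cdots$) must be bounded purely in terms of $r$, independently of $f$.

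Alternatively — and this is probably the cleaner route — I would use localization. By Theorem~\ref{theorem:avmodshefifies}, $M_f$ is a finite $A_f\cV_f$-module of rank $r$, and the action of $\cV_f$ is given by the locally finite sums in \eqref{equation:definitionactionsheaf}. The idea is: to show $\Omega_p(f,\tau) \in \Ann(M)$ for $p > r^2$ and \emph{all} $\tau \in \cV$, it suffices (since $M$ is torsion-free, by Proposition~\ref{proposition:avmodisprojective}) to show it after localizing at each nonzero $g$, or to exploit that the $\Omega_p(f,\tau)$, as $p$ ranges, all lie in the finitely generated torsion-free $A$-module $\End_A(M)$ and satisfy the recursion from Lemma~\ref{lemma:commutatorofomegas}. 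Concretely: pick a transcendence basis and argue that locally on $D(g)$ where $g$ separates coordinates, one has enough derivations $\eta$ with $\eta(f)$ invertible to apply the sharpened Lemma~\ref{lemma:argument1} with the clean bound $N = r^2$, then glue via Corollary~\ref{corollary:avmodshefifiesrestriction}.

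The main obstacle is bookkeeping of indices in the descent: Lemma~\ref{lemma:argument1} as stated produces a bound $N \geq r^2$ ``depending on $r$'' but the proof writes $N_f = 3N + 4 + k$ in Theorem~\ref{proposition:psifinann}, which is larger than $r^2$. To reach exactly $r^2$ one must avoid the $+3N+4+k$ inflation — i.e., one cannot afford the three-step propagation through $I_{g,h,f,\eta}$ and the Nullstellensatz chain if each step adds to the index. I expect the resolution is that the propagation steps (Lemma~\ref{lemma:lemma02applicationstoavmodules} and Proposition~\ref{proposition:idealannm}) can be re-examined to \emph{preserve} rather than raise the critical index once one knows $\Omega_{r^2+k}(f,\eta) \in \Ann(M)$ for \emph{all} $k \ge 1$ and all $\eta$ with $\eta(f) \ne 0$: the brackets in Lemma~\ref{lemma:bracketrelations001} and Lemma~\ref{lemma:bracketofpsiandderivation} all output $\Omega$'s of index $p+q$ or $p+q-1$ from inputs of index $p,q \ge 1$, so starting from index $r^2+1$ stays at index $\ge r^2+1$. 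Thus the uniform bound $N = r^2 = \rank(M)^2$ should fall out once the base case ``$\Omega_{r^2+1}(f,\eta) \in \Ann(M)$ for one good $\eta$'' is established from the rank-$r^2$ bound on $\End_A(M)$.
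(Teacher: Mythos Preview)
Your proposal correctly isolates the obstacle --- the index inflation in the propagation machinery of Theorem~\ref{proposition:psifinann} --- but it does not actually overcome it. The descent in Lemma~\ref{lemma:argument1} applied to a relation among $\Omega_1(f,\eta),\ldots,\Omega_{r^2+1}(f,\eta)$ does not terminate at index $r^2+1$: after the first bracket (with $\Omega_1$) the indices are unchanged, but each subsequent bracket with $\Omega_{m_1+\cdots+m_j-(j-1)}$ strictly raises them, and after $r^2$ iterations the surviving index is polynomially larger than $r^2+1$. So your hoped-for ``base case'' $\Omega_{r^2+1}(f,\eta)\in\Ann(M)$ is never reached. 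Linear dependence in $\End_A(M)$ yields only a relation $b\,\Omega_{r^2+1}+\sum a_i\Omega_i\equiv 0$, and nothing in your outline converts this to actual vanishing at index $r^2+1$. Your localization alternative points in the right direction but still rests on a ``sharpened Lemma~\ref{lemma:argument1} with the clean bound $N=r^2$'' that you have not produced.

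The missing idea, which the paper uses, is an eigenvector argument over the fraction field. Pass to $F=\Frac(A)$ and $\overline{M}=F\otimes_A M$; given $\mu\in\cV$ with $\mu(f)\neq 0$, set $\eta=\mu/\mu(f)\in\cV_{\mu(f)}$ so that $\eta(f)=1$. Lemma~\ref{lemma:commutatorofomegas} then collapses to $[\Omega_1(f,\eta),\Omega_p(f,\eta)]=(1-p)\,\Omega_p(f,\eta)$, so $\Omega_1(f,\eta),\ldots,\Omega_{r^2+1}(f,\eta)$ act on $\overline{M}$ as eigenvectors of $\operatorname{ad}\Omega_1(f,\eta)$ on $\End_F(\overline{M})$ with \emph{distinct} eigenvalues. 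Since $\dim_F\End_F(\overline{M})=r^2$, at least one of them is literally zero, say $\Omega_{p_0}(f,\eta)=0$ with $p_0\le r^2+1$. The Witt-type relation $[\Omega_a(f,\eta),\Omega_b(f,\eta)]=(a-b)\,\Omega_{a+b-1}(f,\eta)$ (again using $\eta(f)=1$) then propagates this vanishing upward with no index loss, giving $\Omega_q(f,\eta)=0$ for every $q>r^2$. Because $\eta(f)=1$, the ideal in Proposition~\ref{proposition:idealannm} is all of $F$, so the spread to arbitrary $\tau$ is immediate over $F$; finally torsion-freeness of $M$ (Proposition~\ref{proposition:avmodisprojective}) pulls the conclusion back to $\Ann(M)$. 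The eigenvector step is precisely what lets one land at the sharp bound $r^2$ rather than some larger function of $r$.
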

\begin{proof}
    Let $f \in A$.  If $f \in \bk$, then $\Omega_1(f,\eta) = 0$ for every $\eta \in \cV$. Assume $f \notin \bk$, then there exists $\mu \in \cV$ such that $\mu(f)\neq 0$. Set $\eta = \frac{\mu}{\mu(f)} \in \cV_{\mu(f)}$, then $\eta(f)=1$. By Lemma \ref{lemma:commutatorofomegas}, 
    \[
    [\Omega_1(f,\eta),\Omega_p(f,\eta ) = (1-p) \Omega_p(f,\eta).
    \]
    Consider $F=\Frac(A)$ the field of fractions of $A$, and $\overline{M} = F \otimes_A M$, then we may see each $\Omega_p(f,\eta)$ as an element of the vector space $\End_F(\overline{M})$ of $F$-linear endomorphisms of $\overline{M}$. We have that $\Omega_1(f,\eta)$ acts on $\End_F(\overline{M})$ by commutation $\Omega_1(f,\eta) \cdot A=  [\Omega_1(f,\eta),A]$ for each $A \in \End_F(\overline{M})$. The elements of the set $\left \{ \Omega_p(f,\eta) \mid p =1,\dots,\rank(M)^2+1 \right  \} \subset \End_F(\overline{M})$ are eigenvectors of $\Omega_1(f,\eta)$ with distinct eigenvalues. Therefore, there exists $p \in 1,\dots,\rank(M)^2+1$ such that $\Omega_p(f,\eta)$ is trivial. However, by Lemma \ref{lemma:commutatorofomegas}
    \[
        [\Omega_a(f,\eta), \Omega_b(f,\eta)] = (a-b) \Omega_{a+b-1} (f,\eta)
    \]
    thus $\Omega_p(f,\eta)=0$ implies $\Omega_q(f,\eta) = 0 $ for every $q > p$. In particular, for all $q > \rank(M)^2$ we have $\Omega_{q}(f,\eta)  \in \Ann(\overline{M})$. By Proposition \ref{proposition:idealannm}, $\eta(f)\eta(\eta(f)) =1$ implies that $\Omega_q(f,\tau) \in \Ann(\overline{M})$ for every $\tau \in \Der_{\bk} (F)$, $q> \rank(M)^2$. Since $\cV$ injects itself in $\Der_{\bk}(F)$ and $M$ is a torsion-free $A$-module, we have that $\Omega_p(f,\tau) \in \Ann(M) $ for every $p > \rank(M)^2$, and $\tau \in \cV$.
\end{proof}

The Lie algebra $A\# \cV$ is a $A\otimes A$-module with action given by $(a\otimes b) (f\# \eta) = af \# b\eta$. For each $f_1, \cdots, f_p\in A$ and $\eta \in A$, define 
\[
\Omega((f_1,\cdots, f_p), \eta) = \prod_{i=1}^p (f_i\otimes 1-1\otimes f_i) (1\# \eta). 
\]
In particular, if $f_i=f_j$ for each $i=1,\dots, p$, then $\Omega((f_1,\dots, f_p), \eta)=\Omega_p(f_1,\eta)$.
\begin{corollary}\label{corollary:omegaisdifferenrial}
    For each $p>\rank(M)^2$, $\Omega ((f_1,\dots ,f_p), \eta)\in \Ann(M) $. 
\end{corollary}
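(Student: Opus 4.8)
The plan is to reduce the statement about the ``polarized'' element $\Omega((f_1,\dots,f_p),\eta)$ to the already–established diagonal case $\Omega_p(f,\tau)\in\Ann(M)$ from Proposition \ref{proposition:omegaarezero}, using a standard polarization argument. First I would observe that, for fixed $\eta\in\cV$ and fixed $m\in M$, the assignment
\[
(f_1,\dots,f_p)\longmapsto \Omega((f_1,\dots,f_p),\eta)\,m
\]
is a symmetric $\bk$-multilinear function of its arguments: symmetry is clear since the operators $f_i\otimes 1-1\otimes f_i$ commute inside $A\otimes A$ acting on $A\#\cV$, and multilinearity in each slot is immediate from the definition. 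Hence the polarization identity expresses $\Omega((f_1,\dots,f_p),\eta)\,m$ as a $\bk$-linear combination of diagonal values $\Omega((g,\dots,g),\eta)\,m=\Omega_p(g,\eta)\,m$ with $g$ ranging over the $2^p$ sums $\sum_{i\in S}f_i$, $S\subseteq\{1,\dots,p\}$; concretely,
\[
p!\,\Omega((f_1,\dots,f_p),\eta)
=\sum_{S\subseteq\{1,\dots,p\}}(-1)^{p-|S|}\,\Omega_p\!\Bigl(\textstyle\sum_{i\in S}f_i,\ \eta\Bigr).
\]

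Next, since $\operatorname{char}\bk=0$, the factor $p!$ is invertible, so it suffices to show that each term on the right annihilates $M$. But for $p>\rank(M)^2$, Proposition \ref{proposition:omegaarezero} gives exactly $\Omega_p(g,\eta)\in\Ann(M)$ for \emph{every} $g\in A$ and every $\eta\in\cV$; applying this with $g=\sum_{i\in S}f_i$ for each subset $S$ finishes the argument. One small bookkeeping point is that the $S=\varnothing$ term is $\Omega_p(0,\eta)$, which vanishes identically from the definition of $\Omega_p$, so it may simply be dropped.

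I do not expect a genuine obstacle here; the only thing to be careful about is making the polarization identity precise as an identity in $A\#U(\cV)$ (not merely after acting on $M$), which is legitimate because the operators $f_i\otimes 1-1\otimes f_i$ act on the $A\otimes A$-module $A\#\cV$ and the expansion $\prod_i(f_i\otimes 1-1\otimes f_i)=\sum_{S}(-1)^{p-|S|}(\sum_{i\in S}f_i\otimes 1)\cdots$ is a formal consequence of multilinearity together with the diagonal restriction noted after the definition of $\Omega$. Once that identity is in hand, the result is immediate from Proposition \ref{proposition:omegaarezero}.
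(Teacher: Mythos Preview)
Your proposal is correct and follows essentially the same polarization strategy as the paper: both arguments reduce the mixed element $\Omega((f_1,\dots,f_p),\eta)$ to diagonal elements $\Omega_p(g,\eta)$ and then invoke Proposition~\ref{proposition:omegaarezero}. The only difference is cosmetic: the paper expands $\Omega_p\bigl(\sum_i a_i f_i,\eta\bigr)$ multinomially as a polynomial in the scalars $a_1,\dots,a_p\in\bk$ and then appeals to a Vandermonde-type ``linear algebra argument'' to extract each coefficient, whereas you write down the explicit inclusion--exclusion polarization identity
\[
p!\,\Omega((f_1,\dots,f_p),\eta)=\sum_{S\subseteq\{1,\dots,p\}}(-1)^{p-|S|}\,\Omega_p\Bigl(\textstyle\sum_{i\in S}f_i,\eta\Bigr)
\]
and divide by $p!$. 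Your version is arguably cleaner, since it produces the needed identity directly in $A\#\cV$ without an auxiliary linear-independence step; the paper's version has the minor advantage that it simultaneously yields the annihilation of all $\Omega((f_{i_1},\dots,f_{i_p}),\eta)$ with repetitions, though of course that also follows from your formula by specializing some of the $f_i$ to coincide.
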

\begin{proof}
    Using that each $\Omega_p(f_i, \eta) \in \Ann(M) $ and 
    \[
\Omega_p\left (  \sum_{i=1}^p a_i f_i, \eta  \right) = \sum_{l_1+\cdots +l_p=p}\binom{p}{l_1,\cdots, l_p} a_1^{l_1}\cdots a_p^{l_p}\prod_{i=1}^p (f_i\otimes 1-1\otimes f_i)^{l_i}(1\# \eta) \in \Ann(M)
    \] 
    for each $a_1,\cdots, a_p\in \bk$, a linear algebra argument shows that $\Omega((f_{i_1}, \cdots, f_{i_p}), \eta) \in \Ann(M) $ for every $1\leq i_1, \dots, i_p\leq p$. 
\end{proof}

\section{Global theory of $A\cV$-module}\label{section:005globaltheory}
Let $Y$ be a scheme with structure sheaf $\mathcal{O}$. Consider the sheaf $\Theta$ over $Y$ given by $\Gamma(U,\Theta) = \Der(B)$ for each open affine set $U=\Spec(B) \subset Y$. A sheaf $\cM$ over $Y$ is called \emph{infinitesimally equivariant}, or \emph{infeq} for short, if for each affine open set $U= \Spec(B) \subset Y$ we have that $\Gamma(U,\cM)$ is a $\Gamma(U,\Theta)$-module such that
\[
\eta\cdot(f \cdot m) = \eta(f)\cdot m +f \cdot ( \eta \cdot m) \quad \text {for each} \ \eta \in \Gamma(U,\Theta), \ f \in B, \ m \in \Gamma(U,\cM).
\]
If $\cM$ is a vector bundle, we call $\cM$ an \emph{infeq bundle}. 

\begin{example}
    The tangent sheaf $\Theta$ over $Y$ is an infeq sheaf, as well as every $D$-module over $Y$. However, as we have stressed above, most infeq modules do not come from $D$-modules, indeed this is already the case for the adjoint action of $\Theta$ on itself. The sheaf of $\mathcal{J}^n\cO$ of $n$-order jets of sections is another example of an infeq sheaf. Indeed, one expects every sheaf of local differential geometric origin (we are being purposefully imprecise here) to admit the structure of an infeq sheaf.
\end{example} 

\begin{example} In the case of the projective line $\mathbb{P}^{1}$, every line bundle is an infeq sheaf in a unique manner. Indeed, $\Omega\cong \mathcal{O}(-1)$ is infeq in the usual manner. It has a unique square root, $\mathcal{O}(-1)$, which therefore must also be infeq. The tensor powers and duals of $\mathcal{O}(-1)$ are all automatically infeq modules, and so we are done. With some work one can show that on an elliptic curve, the only infeq line bundles are $D$-modules, and further that on a curve of genus $g\geq 2$, every line bundle can be given the structure of an infeq bundle, now in multiple ways. The proof of these statements is standard algebro-geometric obstruction theory and we will not go into details in this note. 
   
\end{example}

\details{

\begin{example} 
Take $Y=\bP^n$ the $n$-dimensional projective space, then infeq bundles may be created by gluing infeq bundles on $\A^n$. Assume $n=1$. For $\lambda \in \bk$ consider the one-dimensional $\bk\cong\gl(1)$-module $\bk v_{\lambda}$ given by $1v_{\lambda}=\lambda v_{\lambda}$. Then $T_{\lambda}(D(h)) =\bk [t]_h \otimes \bk v_{\lambda} $ is a $\bk[t]_h \# U\left(\bk[t]_h\frac{\partial}{\partial t}\right) $-module with 
\[
g(f \otimes v_{\lambda}) = (gf)\otimes v_{\lambda}, \quad g\partial (f \otimes v_{\lambda}) = g\partial(f)\otimes v_{\lambda} + \partial(g) f \otimes \lambda v_{\lambda}
\]
for each $g, f \in \bk [t]_h$. Therefore, $T_{\lambda} $ is a infeq bundle on $\A^1$ with rank $1$.
Construct $\bP^1$ by gluing $\A^1_1 =\Spec (\bk[x]) $ and $\A^1_2 =\Spec (\bk[y]) $ along the isomorphism $(\A_1^1)_x \cong (\A_2^1)_y$ given by $x^{-1} \leftrightarrow y$. Consider the infeq sheaf $T_{\lambda}$ on $\A^1_1$ and $T_{\lambda'}$ on $\A^1_2$. Then it is possible to glue them to an infeq sheaf on $\bP^1$ if and only if $\lambda'=-\lambda$. 
\end{example}
}

We may reformulate the definition of infeq sheaves using a different approach. First let us recall the \emph{Atiyah algebra} associated to a sheaf, $\cM$, on $Y$. For a detailed discussion of these, and related constructions, the reader is referred to \cite{BS88}. We have that $\mathrm{Diff}^{\leq 1} (\cM, \cM)$, the sheaf of differential operators from $\cM$ to itself of order at most one. This is equipped with a \emph{symbol}, $\sigma$, to $\Theta \otimes \End_{\cO} (\cM) $. The preimage under $\sigma $ of the sheaf $\Theta$ is called \emph{Atiyah algebra} of $\cM$, and denoted $\mathrm{At}(\cM) $. This is naturally a Lie algebroid with anchor given by the symbol, and is in fact the Lie algebroid of infinitesimal symmetries of the pair $(Y, \cM) $, cf. \cite{BS88}. If $\cM$ is an infeq sheaf on $Y$, then the image of the associated representation $\Theta \rightarrow \End(\cM) $ lies in $\mathrm{At}(\cM)$ and this map is a splitting of the symbol $\sigma$. On the other hand, a sheaf $\cM$ on $Y$ is an infeq sheaf if it is equipped with a choice of $\bk$-linear Lie algebra splitting of the symbol $\sigma : \mathrm{At} (\cM) \rightarrow \Theta $. We refer to the splitting as the \emph{Lie map}, and denote it $L$. This gives us a different definition of infeq sheaves, and it allows us to define infeq sheaves in different geometric contexts, notably smooth or complex analytic such. A geometric reformulation of the main theorem (Theorem \ref{theorem:repisdifop}) of this note can be given as follows,

\begin{theorem}\label{theorem:geomformulation} Let $\mathcal{F}$ be a finite type infeq sheaf on a smooth variety $X$. Then the associated Lie map, $L$, is a differential operator of order bounded above by $\operatorname{rank}(\mathcal{F})^2$. \end{theorem}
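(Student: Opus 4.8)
The plan is to reduce the geometric statement to the local algebraic result already established, namely Proposition~\ref{proposition:omegaarezero} and its reformulation Corollary~\ref{corollary:omegaisdifferenrial}. Being a differential operator of order $\leq n$ is a local condition on $X$, so I would first cover $X$ by affine opens $U = \Spec(A)$ with $A = \bk[U]$, on each of which $\Gamma(U,\mathcal{F}) = M$ is a finite $A\cV$-module in the sense of Section~\ref{section:001preliminars}, and the Lie map $L$ restricts to the representation $\rho\colon \cV \to \End_{\bk}(M)$. Since $\rho$ is visibly additive in $\eta$ and $A$-linear in the arguments up to the Leibniz correction, the question is precisely whether $\rho$, regarded as an element of $\Hom_{\bk}(\cV, \End_{\bk}(M))$ with its left $A$-module structure, lies in $\mathrm{Diff}^{\leq n}$ for $n = \rank(\mathcal{F})^2$; by the standard Grothendieck characterization this means that the iterated commutators $[f_1,[f_2,\dots,[f_{n+1},\rho]\dots]]$ (brackets taken against left multiplication by $f_i \in A$) vanish.

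Next I would identify these iterated commutators with the elements $\Omega((f_1,\dots,f_{n+1}),\eta)$ defined just before Corollary~\ref{corollary:omegaisdifferenrial}. Indeed, from the smash-product formula $(f\#1)(1\#\eta) - (1\#\eta)(f\#1) = -\eta(f)\#1$ one sees that commuting $1\#\eta$ with left multiplication by $f$ produces, after sign bookkeeping, the operator $(f\otimes 1 - 1\otimes f)(1\#\eta)$ acting on $M$; iterating $p$ times yields exactly $\Omega((f_1,\dots,f_p),\eta)$. Thus the order-$\leq n$ condition for $\rho$ is equivalent to $\Omega((f_1,\dots,f_{n+1}),\eta) \in \Ann(M)$ for all $f_i \in A$, $\eta \in \cV$, which is Corollary~\ref{corollary:omegaisdifferenrial} with $p = n+1 > \rank(M)^2$. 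This proves the bound on each affine chart.

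Finally I would check that the bound is uniform and glues: $\rank_{\fp}(\mathcal{F})$ is constant equal to $\rank(\mathcal{F})$ since $\mathcal{F}$ is a vector bundle (Proposition~\ref{proposition:avmodisprojective}), so the integer $n = \rank(\mathcal{F})^2$ is the same on every chart; and the property of being a differential operator of order $\leq n$ is local, so the local verifications assemble to the global statement for $L$ on $X$. The one point requiring care—and the step I expect to be the main obstacle—is the precise translation in the second paragraph: one must confirm that the $A$-module structure on $\mathrm{At}(\mathcal{F})$ and on $\Hom_{\bk}(\cV,\End_{\bk}(M))$ used in the definition of ``differential operator'' is the left-multiplication structure matching the commutators computed via the smash product, and that the symbol/order conventions line up so that ``$p$ nested brackets vanish'' corresponds to ``order $\leq p-1$''. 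Once this dictionary is fixed, the argument is a direct appeal to the results of Section~\ref{section:004localization}.
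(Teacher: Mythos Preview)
Your proposal is correct and follows essentially the same route as the paper: the paper proves Theorem~\ref{theorem:geomformulation} by declaring it a reformulation of the affine statement (Theorem~\ref{theorem:repisdifop}), which in turn is proved exactly as you outline---invoke Corollary~\ref{corollary:omegaisdifferenrial} to get $\Omega((f_1,\dots,f_p),\eta)\in\Ann(M)$ for $p=\rank(M)^2+1$, and then cite Grothendieck's characterization of differential operators (EGA IV, Proposition~16.8.8). Your added discussion of passing to affine charts and gluing back is the content hidden in the word ``reformulation''; one small wrinkle is that the smash-product bracket $(f\#1)(1\#\eta)-(1\#\eta)(f\#1)=-\eta(f)\#1$ you wrote is not the relevant commutator---what you actually need (and correctly use) is $[f,\rho](\eta)=f\rho(\eta)-\rho(f\eta)$, which is literally $(f\otimes1-1\otimes f)(1\#\eta)$ acting on $M$, with no sign bookkeeping required.
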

\begin{proof}
    It is a reformulation of the Theorem \ref{theorem:repisdifop}, which will be proved below.
\end{proof}

\subsection{Global $A\cV$-module associated to $A\cV$-module}

If $Y=\Spec(B)$ is an affine scheme, it is not clear that the associated sheaf $\tilde{M}$ of a module $M$ over $B\# U(\Der(B))$ is an infeq sheaf. However, the answer is positive when we take a smooth irreducible algebraic variety and a finitely generated module as we showed on previous sections. Now, if we start with a coherent infeq sheaf $\cM$ over the smooth irreducible algebraic variety $X$, then global sections of $\cM$ defines an $A\cV$-module by definition, and it is finitely generated since it $\cM$ is coherent. To summarize, we have an equivalence of categories.

\begin{proposition}\label{proposition:infeqsheafiseqvtoavmod}
    The category of finite $A\cV$-modules over a smooth $\bk$-algebra $B$ is equivalent to category of infeq bundles over the smooth algebraic variety $X=\operatorname{Spec}(B)$.
\end{proposition}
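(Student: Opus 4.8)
The plan is to establish this equivalence by carefully matching the two constructions described in the paragraph preceding the statement, namely $\cM \mapsto \Gamma(X,\cM)$ and $M \mapsto \tilde{M}$, and showing they are mutually inverse on the relevant subcategories. First I would fix notation: write $A = B = \bk[X]$ and $\cV = \Der(A)$. Given a finite $A\cV$-module $M$, Proposition \ref{proposition:avmodisprojective} shows $M$ is a projective (hence locally free, by smoothness and finite generation) $A$-module, so $\tilde{M}$ is a vector bundle on $X$. The content of Section \ref{section:004localization}, in particular Theorem \ref{theorem:avmodshefifies} and Corollary \ref{corollary:avmodshefifiesrestriction}, is exactly what is needed to see that $\tilde{M}$ carries a well-defined infeq structure: for each basic open $D(f)$ the formula \eqref{equation:definitionactionsheaf} defines a $\cV_f$-action on $M_f = \Gamma(D(f),\tilde{M})$ satisfying the Leibniz rule, and Corollary \ref{corollary:avmodshefifiesrestriction} guarantees these are compatible under restriction $D(fg) \hookrightarrow D(f)$. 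Since the basic opens form a basis of the topology and $\Der$ is a sheaf, this glues to an infeq structure on $\tilde{M}$, giving a functor $M \mapsto \tilde{M}$ into infeq bundles on $X$.

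Next I would construct the functor in the other direction. Given a coherent infeq sheaf $\cM$ on $X$, its module of global sections $M := \Gamma(X,\cM)$ is an $A$-module, finitely generated because $\cM$ is coherent and $X$ is affine (so $\Gamma$ is exact and $\tilde{M} \cong \cM$ by Serre's theorem), and it carries a $\cV = \Gamma(X,\Theta)$-action satisfying the Leibniz identity directly from the definition of an infeq sheaf applied to the open set $U = X$ itself. Thus $M$ is a finite $A\cV$-module, and $\cM \mapsto \Gamma(X,\cM)$ is a functor from coherent infeq sheaves on $X$ to finite $A\cV$-modules. Both functors are clearly additive and compatible with morphisms (a morphism of infeq sheaves is in particular $\cO$-linear and $\Theta$-equivariant, hence induces a morphism on global sections, and conversely).

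The main point then is that these two functors are quasi-inverse. On underlying quasi-coherent sheaves / modules this is just Serre's equivalence $\cM \leftrightarrow \Gamma(X,\cM)$ on the affine variety $X$, so $\tilde{\Gamma(X,\cM)} \cong \cM$ and $\Gamma(X,\tilde{M}) \cong M$ naturally. What remains is to check that these isomorphisms respect the extra structure, i.e. that the infeq structure on $\tilde{\Gamma(X,\cM)}$ produced by our gluing construction agrees with the original one on $\cM$, and dually that the $\cV$-action on $\Gamma(X,\tilde{M})$ recovered from the sheaf structure is the original $\cV$-action on $M$. The latter is immediate: by construction the global action is $\Omega_0(f,\eta) = 1 \# \eta$ restricted to $U = X$. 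For the former, the key observation is that an infeq structure on a coherent sheaf is \emph{determined} by its restriction to global sections together with the requirement that it sheafify compatibly with the $\cV_f$-actions; concretely, on $D(f)$ the action of $\eta/f^k \in \cV_f$ on a section is forced by the Leibniz rule and the global action of $\cV$ to be given by the locally finite sum in Theorem \ref{theorem:avmodshefifies} — this is precisely the uniqueness computation already carried out in Section \ref{section:004localization} (the well-definedness argument via Lemma \ref{lemma:relationforwelldefined}). Hence any infeq extension of the global action must coincide with the one we constructed, which gives the needed compatibility.

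I expect the main obstacle to be purely expository rather than mathematical: making precise the sense in which the infeq structure on a coherent sheaf is rigidly determined by its value on $X$, so that "sheafify the global action" and "restrict the sheaf action" are inverse operations. One should be slightly careful that the functor lands in \emph{bundles} on both sides — on the $A\cV$-side this is Proposition \ref{proposition:avmodisprojective}, and on the sheaf side "infeq bundle" already builds in local freeness — and that smoothness of $B$ is used (via $X$ smooth) to identify coherent locally free sheaves with finite projective modules. Once these identifications are in place, functoriality and the quasi-inverse property follow from Serre's theorem on the affine variety $X$ together with the computations of Section \ref{section:004localization}, with no further calculation required.
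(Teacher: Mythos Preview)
Your proposal is correct and follows essentially the same approach as the paper: both directions are built from the same ingredients (Proposition~\ref{proposition:avmodisprojective} for projectivity, Theorem~\ref{theorem:avmodshefifies} and Corollary~\ref{corollary:avmodshefifiesrestriction} for the sheafified action, and the definition of infeq sheaf for global sections). In fact you are more thorough than the paper, which only constructs the two functors and leaves the quasi-inverse verification implicit; your one soft spot is the uniqueness step, where the cleanest justification is not the well-definedness computation via Lemma~\ref{lemma:relationforwelldefined} but rather the vanishing of $\Omega_p(f,\eta/f)$ for large $p$ (Theorem~\ref{proposition:psifinann} or Proposition~\ref{proposition:omegaarezero} applied to the $A_f\cV_f$-module $M_f$), which forces any two compatible $\cV_f$-actions to agree since their difference on $\eta/f$ is $A_f$-linear and killed by a power of $f$.
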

\begin{proof} 
If $X=\Spec(A)$ is a smooth algebraic variety and $\cM$ is an infeq bundle, then by definition $\Gamma(X,\cM)$ is a finitely generated $A \# U(\cV)$-module for $\cV=\Der(A)$.

Let $M$ be an $A\cV$-module.
    Consider the quasi-coherent sheaf $\tilde{M}$ on $X$ given by $\Gamma(D(f),\tilde{M}) = \tilde{M}(D(f))=M_f = A_f \otimes_A M$ for each element $f \in A$, where $D(f) = \{ \fp \in \Spec(A) \mid f \notin \fp \}$. Then $\tilde{M}$ is coherent, because $M$ is finitely generated, and it is a vector bundle, since $M$ is a projective $A$-module by Proposition \ref{proposition:avmodisprojective}. By Theorem \ref{theorem:avmodshefifies} and Corollary \ref{corollary:avmodshefifiesrestriction}, $\Gamma(D(f),\tilde{M})$ is a $\Gamma(D(f),\tilde{\cV})$-module, its action satisfy the Leibniz Rule, it is compatible with restrictions, and it agrees on intersections. Therefore, $\tilde{M}$ is an infeq bundle on $X$.
\end{proof}

As we commented before, the associated representation $\rho: \cV \rightarrow \End(M)$ is a choice of $\bk$-linear splitting of the symbol $\mathrm{At}(M) \rightarrow \cV$. A-prior it need not be linear over the algebra of functions, but we can ask instead for a weakened form of linearity. More precisely, we can ask if $\rho$ is a differential operator. As stated above in \ref{theorem:geomformulation}, this is indeed the case, so long as $M$ is finite type.

\begin{theorem}\label{theorem:repisdifop}
Let $M$ be a finite $A \cV$-module. Then the associated representation $\rho: \cV \rightarrow \mathrm{At}(M)$ is a differential operator of order less or equal than $\rank (M)^2 $. 
\end{theorem}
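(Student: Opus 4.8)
The plan is to unwind the definition of ``differential operator of order $\le n$'' and show that the representation $\rho$ satisfies it with $n = \operatorname{rank}(M)^2$. Recall that a $\bk$-linear map $\rho\colon \cV \to \End_{\bk}(M)$, viewed as a map of left $A$-modules (where $A$ acts on $\cV$ by $(f\cdot\eta) = f\eta$ and on $\End_{\bk}(M)$ by left multiplication), is a differential operator of order $\le n$ precisely when, for any $f_1,\dots,f_{n+1} \in A$, the iterated commutator
\[
[\,f_{n+1},[\,f_{n},[\cdots[\,f_1,\rho\,]\cdots]\,]\,]
\]
vanishes, where $[\,f,\rho\,]$ denotes the map $\eta \mapsto f\cdot\rho(\eta) - \rho(f\cdot\eta) = f\rho(\eta) - \rho(f\eta)$. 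So the first step is to identify this iterated commutator explicitly in terms of the elements $\Omega$ introduced before Corollary \ref{corollary:omegaisdifferenrial}.

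The key computation is that $[\,f,\rho\,]$ evaluated at $\eta$ is exactly (up to sign) the action of $\Omega_1(f,\eta) = f\#\eta - 1\# f\eta$, because $\rho(\eta)$ is the action of $1\#\eta$ and $\rho(f\eta)$ is the action of $1\# f\eta$, while $f\rho(\eta)$ is the action of $f\#\eta$. Iterating, the $(n+1)$-fold commutator $[\,f_{n+1},[\cdots[\,f_1,\rho\,]\cdots]]$ evaluated at $\eta$ is the action of
\[
\prod_{i=1}^{n+1}\bigl(f_i\#1 - 1\#f_i\bigr)\,(1\#\eta) = \Omega\bigl((f_1,\dots,f_{n+1}),\eta\bigr),
\]
which is precisely the element appearing in Corollary \ref{corollary:omegaisdifferenrial}. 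Here one uses that the factors $(f_i\otimes 1 - 1\otimes f_i)$ commute with one another in their action on $A\#\cV$, so the order of the nested commutators is immaterial and the product is well-defined; this is exactly the $A\otimes A$-module structure set up just before that corollary. By Corollary \ref{corollary:omegaisdifferenrial}, for $n+1 > \operatorname{rank}(M)^2$, i.e. $n \ge \operatorname{rank}(M)^2$, this element lies in $\Ann(M)$, hence acts as zero on $M$. Therefore the $(n+1)$-fold commutator of $\rho$ with any $f_1,\dots,f_{n+1}$ vanishes, which is the definition of $\rho$ being a differential operator of order $\le \operatorname{rank}(M)^2$.

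Finally, one should note that the image of $\rho$ lands in $\mathrm{At}(M)$: this is the observation already recorded in the discussion of the Atiyah algebra, namely that an infeq structure is by definition a Lie splitting of the symbol $\sigma\colon \mathrm{At}(M)\to\cV$, so $\rho$ factors through $\mathrm{At}(M)$ and is a splitting. The only genuine subtlety, and the step I expect to require the most care, is the bookkeeping identity matching the iterated commutator of scalars against $\rho$ with the product $\prod_i(f_i\otimes 1 - 1\otimes f_i)(1\#\eta)$ — in particular checking signs and confirming that passing between the ``nested commutator'' description of differential operators and the ``$A\otimes A$ module'' description of the $\Omega$'s is consistent. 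Once that dictionary is in place, the theorem is an immediate consequence of Corollary \ref{corollary:omegaisdifferenrial}, and the bound $\operatorname{rank}(M)^2$ is inherited directly from Proposition \ref{proposition:omegaarezero}.
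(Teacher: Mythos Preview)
Your proposal is correct and takes essentially the same approach as the paper: both reduce to Corollary~\ref{corollary:omegaisdifferenrial} via the identification of the iterated commutator $[f_{p},[\cdots[f_1,\rho]\cdots]](\eta)$ with the action of $\Omega((f_1,\dots,f_p),\eta)$. The only difference is cosmetic: the paper outsources the equivalence between ``vanishing of $\prod_i(f_i\otimes 1-1\otimes f_i)$ applied to $\rho$'' and ``$\rho$ is a differential operator of order $<p$'' to \cite[Proposition~16.8.8]{Gro67}, whereas you spell out that dictionary by hand.
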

\begin{proof}
    Set $p=\rank(M)^2+1$. By Corollary \ref{corollary:omegaisdifferenrial}, 
    \[
\Omega((f_1,\dots, f_p), \eta) m=\sum_{H\subset \{ 1,\dots, p\}} (-1)^{|H|} \left (\prod_{i\in H}f_i \right) \rho\left (\left ( \prod_{i\in H}f_i \right)\eta \right)=0
    \]
    for each $m\in M$, $\eta\in \cV$, $f_1,\dots, f_p\in A$. By \cite[Proposition 16.8.8]{Gro67}, $\rho$ is a differential operator.
\end{proof}
This fact does not extend to $A\cV$-modules that are not finitely generated as $A$-module. For instance, $A\#U(\cV) $ is naturally an $A\cV$-module, but the associated Lie representation of $\cV$ is not a differential operator. The previous proposition corroborate Proposition \ref{proposition:infeqsheafiseqvtoavmod}, in the sense that $\rho : \cV \rightarrow \mathrm{At} (\cM) $ being a differential operator implies that it sheafifies in the Zariski (and indeed \'{e}tale topology), and the associated sheaf $\tilde{\rho}: \tilde{\cV} \rightarrow \mathrm{At}(\tilde{M})$ is also a differential operator.

\section*{Acknowledgments}	
Both authors thank Colin Ingalls, Vyacheslav Futorny and in particular Yuly Billig for numerous helpful discussions.
H.\,R.\,was financed by São Paulo Research Foundation (FAPESP) (grant 2020/13811-0, 2022/00184-3).


\end{document}